\newcommand{\C}{ {\mathcal {C}} }
\newcommand{\Z}{\mathbb{Z}}
\newcommand{\F}{\mathbb{F}}
\newcommand{\vspan}[1]{\left\langle {#1} \right\rangle}
\renewcommand{\Bbb}{\mathbb}
\theoremstyle{plain} \newtheorem{theorem}{Theorem}
\theoremstyle{plain} 
\theoremstyle{plain} \newtheorem{lemma}[theorem]{Lemma}
\theoremstyle{plain} 
\theoremstyle{plain} \newtheorem{rem}[]{Remark}
\theoremstyle{plain} \newtheorem{corollary}[theorem]{Corollary}
\theoremstyle{plain} \newtheorem{example}[theorem]{Example}
\author{Tim L. Alderson\\
	Department of Mathematics and Statistics\\
	University of New Brunswick Saint John\\
	Saint John, NB, E2L 4L5\\
	Canada}
\title{Bounds on MLDR Codes Over $\Z_{p^t}$}
\date{\today}
\begin{document}
\maketitle

\begin{abstract}
	Upper bounds on the minimum Lee distance of codes that are linear over $\Z_q$, $q=p^t$, $p$ prime are discussed. The bounds are Singleton like, depending on the length, rank, and alphabet size of the code. Codes meeting such bounds are referred to as Maximum Lee Distance with respect to Rank (MLDR) Codes. We present some new bounds on MLDR codes, using combinatorial arguments. In the context of MLDR codes, our work provides improvements over existing bounds in the literature.  
	
\textit{MSC2010:} primary: 94B65; secondary: 94B65, 94B25 

\textit{Keywords:} Lee metric, maximum Lee distance codes, singleton bound, MLD, MLDR codes	
\end{abstract}

\section{Introduction}

The Lee metric, introduced by Lee in 1958 as an alternative to the Hamming metric \cite{Lee1958}, was initially applied to various noisy communication channels, particularly those employing phase-shift keying modulation \cite{Nakamura1979}. Over time, codes designed for the Lee metric have found applications in constrained and partial-response channels \cite{Byrne2002}, interleaving schemes \cite{Blaum1998}, orthogonal frequency-division multiplexing (OFDM) \cite{Schmidt2007}, multidimensional burst-error correction \cite{Etzion2009a}, and error correction for flash memories \cite{Barg2010}. The study of optimal Lee codes has gained increasing attention, driven by these diverse applications.

As with the Hamming metric, a fundamental problem is determining upper bounds on $d_L(\C)$, the minimum Lee distance of a code $\C$ with fixed size, length, and alphabet. One of the earliest such bounds was established by Wyner and Graham \cite{Wyner1968}. Later, Chiang and Wolf \cite{MR0342262} provided a bound on free codes that, in many cases, improved upon that of Wyner and Graham. This bound was subsequently shown to hold for (not necessarily linear) codes of integral $p^t$-type in \cite{Alderson2013}. More recently, Byrne and Weger \cite{Byrne2023} extended the Chiang–Wolf bound to the setting of (not necessarily free) linear codes over the integer residue ring $\Z_{p^t}$, where $p$ is prime.

In this work, we present new bounds on the minimum Lee distance of linear codes of fixed rank and length over $\Z_{p^t}$. These bounds not only generalize existing results but also provide improvements over each of the aforementioned bounds. 

\section{Preliminaries}\label{sec: Prelim}

\subsection{MDS Codes and Singleton Defect} \label{sec: MDS Codes and Singleton}
For $n\geq k$, an $(n,k,d)_q$ code $\C$ is a collection of $q^k$
$n$-tuples (or \textit{codewords}) over an alphabet $A$ of size $q$
such that the minimum (Hamming) distance between any two codewords
of $\C$ is $d$.  In
the special case that $A=\F_q$ (the finite field of order $q$) and
$\C$ is a vector space of dimension $k$, $\C$ is a (classical) \textit{linear}
$ [n,k,d ]_q$-code. Without reference to minimum distance, a $q$-nary linear code of length $n$ and dimension $k$ is denoted an $[n,k]_q$-code.\\

The Singleton bound \cite{Singleton1964} dictates that an $(n,k,d)_q$ code
satisfies $d\leq n-k+1$. Codes meeting the Singleton bound are called maximum distance separable (MDS) codes.  The maximum length of a linear $[n,k]_q$-MDS code is denoted $m(k,q)$. 
In 1952,  Bush \cite{Bush1952} established that if $k\ge q$ then $m(k,q) = k+1$. For $k<q$ there is a long standing conjecture regarding linear MDS codes: Every linear $[n,k,n-k+1]_q$ MDS code  with $1 < k < q$ satisfies $n \le q + 1$, except when $q$ is even and $k = 3$ or $k = q - 1$ in which case $n \le q + 2$. This conjecture is called the Main Conjecture on Linear MDS Codes, and has been shown to hold in many cases,  (see e.g. \cite{Ball2012a}).

The \textit{Singleton defect} of an
$(n,k,d)_q$ code is \[\text{def}(\C)=n-k+1-d.\] An $(n,k,d)_q$ code $\C$ with
$\text{def}(\C)=0$ is therefore an  MDS code.  Codes of Singleton defect $1$ are called Almost-MDS
(AMDS) codes \cite{MR1409442}, and more generally, those of Singleton defect $s$ are denoted A$^s$MDS codes.

A fundamental problem in coding theory is that of determining the
maximum length of a code with $k$, $q$, and $\text{def}(\C)$ fixed. 

The study of MDS codes, is a major research area. In the classical setting, several properties have been established. In particular, the recent work of Ball \cite{Ball2012a} established that for $p$ prime, linear MDS codes of dimension $k<p$ over $\F_p$ have length at most $p+1$, and attain this bound if and only if the code corresponds to a normal rational curve in $PG(k-1,q)$. Consequently, the bound is only attained by codes with a generator matrix equivalent to the matrix with columns
\[S=\{(1,t,t^2,\ldots,t^{k-1}) \mid t\in \F_p\}\cup \{(0,0,\ldots,0,1)\}.\] 

More generally, if $\C$ is a linear $[n,2]_q$-code with generator matrix $G$, then $\C$ has a codeword of weight $n-\alpha$ if and only if there exists a subset, $S$ of $\alpha$ columns of $G$ that are pairwise linearly dependent, and $S$ is maximal (in that the union of $S$ with any further column of $G$ no longer has this property). It follows that if def$(\C)=s$ then $n\le (q+1)(s+1)$. Inductively, for $k\ge 2$, a linear $[n,k,d]_q$ code $\C$ with $\text{def}(\C)=\delta$ satisfies
\begin{equation}\label{eqn: linear code length bounded by defect}
	k+\delta \le	n\le (\delta+1)(q+1)+k-2.
\end{equation}

In what follows we investigate optimality conditions analogous to the Singleton bound, in the setting of the Lee metric for linear codes over the integer residue ring $\Z_q$, where $q=p^t$ for some prime $p$.

\subsection{Linear Codes Over $\Z_q$, $q=p^t$}
Linear codes  over $\Z_q$, $q=p^t$ are sub-modules over the integer residue ring $\Z_{q}$. 
A $\Z_{q}$ module $\C$ of $\left(\Z_{q}\right)^n$ is called a linear code of length $n$.
An analogue of the (classical) dimension is the $p^t$-type of the code, defined by:
\begin{equation}\label{eqn: kappa}
	\kappa=\log _{p^t}(|\C|) . 
\end{equation}

From the basic theory, any $\Z_{p^t}$ module $\C$ satisfies an isomorphism of the form 

\begin{equation} \label{eqn: C isomsm}
	\C\cong \left(\Z_{p^t}\right)^{k_1} \times\left(\Z_{p^{t-1}}\right)^{k_2} \times \cdots \times(\Z_{p})^{k_t},
\end{equation}

where the parameter $k_1$ is called the free rank of the code and $K=\sum_{i=1}^s k_i$ is called its rank. If $K=k_1$ then the code is said to be free. 
As in the classical case, if $\C \subseteq\left(\Z_{p^t}\right)^n$ is a linear code then we call a matrix $G$ a generator of $\C$ if its row-span is $\C$. 
It is helpful to consider these matrices in their systematic form.

If $\C \subseteq \left(\Z_{p^t}\right)^n$ is a linear code as in (\ref{eqn: C isomsm}), with rank $K$, then $\C$ is equivalent (up to a permutation of coordinate positions) to a code having the following systematic generator matrix $G \in\left(\Z_{p^t}\right)^{K \times n}$
$$
G=\left(\begin{array}{cccccc}
	I_{k_1} & A_{1,2} & A_{1,3} & \cdots & A_{1, t} & A_{1, t+1} \\
	0 & p I_{k_2} & p A_{2,3} & \cdots & p A_{2, t} & p A_{2, s+1} \\
	0 & 0 & p^2 I_{k_3} & \cdots & p^2 A_{3, t} & p^2 A_{3, t+1} \\
	\vdots & \vdots & \vdots & & \vdots & \vdots \\
	0 & 0 & 0 & \cdots & p^{t-1} I_{k_t} & p^{t-1} A_{t, t+1}
\end{array}\right),
$$
where  $A_{i, t+1} \in\left(\Z_{p^{t+1-i}}\right)^{k_i \times(n-K)}, A_{i, j} \in\left(\Z_{p^{t+1-i}}\right)^{k_i \times k_j}$ for $j \leq t$, and  $I_{k_\alpha}$ denotes the $k_\alpha \times k_\alpha$ identity matrix.

The socle of a linear code is the sum of its minimal sub-modules; hence, in the current context, the socle of $\C$ is $ S(\C) =\left\langle p^{t-1}\right\rangle \cap \C$.  In an unpublished manuscript \cite{Horimoto-manuscript200}, Horimoto and Shiromoto establish that a linear code has the same length, rank, and minimum distance as its socle; the same result appears in more contemporary works (see e.g. \cite{Kalachi2021}).

In the setting of linear codes over $\Z_{p^t}$ we have the following analogue of the Singleton bound, (see e.g. \cite{Dougherty2017}).

\begin{lemma}[Singleton Bound for Linear Codes over $\Z_{p^t}$] \label{lem: Singleton MDR}
	Let $p$ be prime. 	If $\C \subseteq\left(\Z_{p^t}\right)^n$ is a linear code of rank $K$ and minimum Hamming distance $d_H(C)=d$, then
	$$
	d \leq n-K+1.
	$$
\end{lemma}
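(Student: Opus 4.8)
The plan is to adapt the classical puncturing proof of the Singleton bound to the module setting, with the essential care being that we must bound the \emph{rank} $K=\sum_i k_i$ rather than the $p^t$-type $\kappa$. Write $d=d_H(\C)$, fix any set $T$ of $d-1$ coordinate positions, and let $\pi\colon\left(\Z_{p^t}\right)^n\to\left(\Z_{p^t}\right)^{n-d+1}$ be the projection that deletes the coordinates in $T$. Restricting $\pi$ to $\C$ gives a $\Z_{p^t}$-module homomorphism onto the punctured code $\C'=\pi(\C)$ of length $n-d+1$.

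The first key step is to show that $\pi|_{\C}$ is injective. If two codewords $x,y\in\C$ satisfy $\pi(x)=\pi(y)$, then $x-y$ is a codeword supported only on $T$, so its Hamming weight is at most $|T|=d-1<d$. By definition of minimum distance this forces $x-y=0$, so $\pi|_{\C}$ is injective and hence $\C\cong\C'$ as $\Z_{p^t}$-modules.

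Because isomorphic finite $\Z_{p^t}$-modules share the same invariant factor decomposition as in (\ref{eqn: C isomsm}), the code $\C'$ has the same rank as $\C$, namely $K$. On the other hand, applying the systematic generator form to the length-$(n-d+1)$ code $\C'$, its rank cannot exceed its length: the pivot identity blocks in a systematic generator matrix of $\C'$ occupy $K$ distinct columns among the $n-d+1$ available, so $K\le n-d+1$. Rearranging yields $d\le n-K+1$, as required.

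The main obstacle, and the one place where the ring setting genuinely departs from the field case, is precisely the rank bookkeeping in the last step: over a field an injective projection bounds the dimension by a size count, but over $\Z_{p^t}$ the cardinality $|\C|=p^{t\kappa}$ does not directly control $K$. The resolution is to note that puncturing produces an honest module isomorphism, so rank is preserved, and then to invoke ``rank $\le$ length,'' which is immediate from the systematic form. An equivalent route would be to pass to the socle $S(\C)=\langle p^{t-1}\rangle\cap\C$: since it is annihilated by $p$ it is an $\F_p$-vector space of dimension $K$ with the same length and minimum distance as $\C$ (by the Horimoto--Shiromoto result quoted above), whereupon the classical Singleton bound over $\F_p$ gives the claim directly.
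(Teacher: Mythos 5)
Your proof is correct, but there is nothing in the paper to compare it against: the paper states this lemma without proof, citing Dougherty's book \cite{Dougherty2017}. Judged on its own merits, your puncturing argument is sound and complete. The injectivity of the projection away from any $d-1$ coordinates is exactly right; the observation that a module isomorphism preserves the decomposition (\ref{eqn: C isomsm}) and hence the rank $K$ is the correct way to handle the bookkeeping; and ``rank $\le$ length'' is legitimately extracted from the systematic generator form (equivalently, from the fact that over the chain ring $\Z_{p^t}$ a submodule of a free module of rank $m$ is generated by at most $m$ elements). You also correctly identify where the argument genuinely departs from the field case: the naive counting step $|\C| \le q^{\,n-d+1}$ only bounds the $p^t$-type $\kappa$, giving $d \le n-\kappa+1$, which is weaker than the claimed bound since $K \ge \kappa$. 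Of your two routes, the socle argument is the more economical and the one most consonant with the paper's own toolkit: the paper quotes the Horimoto--Shiromoto result that a code and its socle share length, rank, and minimum distance, and it deploys exactly this socle-descent device later (Lemma \ref{lem: Phi p^t}, Lemma \ref{lem: LDimpliesMDS prime power}). Indeed, for this inequality you need even less than you invoke: since the socle is a subcode of $\C$ that is an $[n,K]_p$ code over the alphabet $\vspan{p^{t-1}}$, one has $d \le d_H(S(\C)) \le n-K+1$ directly from the classical Singleton bound, with no need for the full equality of minimum distances. The puncturing route buys self-containedness --- it requires no socle machinery, only the structure theorem for finite $\Z_{p^t}$-modules --- at the cost of being slightly longer.
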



\subsection{Lee Weight}\label{subsec: Lee Weight}
The \emph{Lee weight} of an element $a\in\Z_q$ is given by $w_L (a)=\min\{a, q-a\}$. Given an element $c=(c_1,c_2,\ldots,c_n)\in \Z_q^n$, the \emph{Lee weight} of $c$, denoted $w_L(c)$  is given by 
\[
w_L (c)=\sum_{i=1}^nw_L (c_i).
\]
For $c,c'\in \Z_q^n$, the \emph{Lee distance} $d_L(c,c')$ between $c$ and $c'$ is defined to be the Lee weight of their difference, 
\[
d_L(c, c')  = w_L(c-c').
\]

The maximum Lee weight of any element in $\Z_q$ shall be denoted by $M_L(q)$. In particular, we have  
\begin{equation}
	M_L(q)=\left\lfloor\frac{q}{2}\right\rfloor.
\end{equation}  We shall also denote the average nonzero Lee weight of $\Z_q$ by $\mu_q$.  Simple counting shows 

\begin{equation}
	\mu_q= \left\{\begin{array}{ll} \frac{q^2}{4(q-1)} & \text{if $q$ even}\\ \frac{q+1}{4} & \text{if $q$ odd.}\end{array}\right. .
\end{equation} 

Given a linear $ [n,k,d]_q$ code over $\Z_q$, $d_L(\C)$ shall denote the \textit{minimum Lee distance} of $\C$, that is 
\begin{align*}
	d_L(\C) & =\min\{d_L(c, c')\mid c, c'\in \C, c\neq c'\}\\[3pt]
	& =\min\{w_L(c)\mid c \in \C, c\neq 0\}
\end{align*}

Given a specific code $\C$, one can determine $d_L(\C)$ directly, for instance, through brute-force enumeration. Alternatively, methods such as filtration \cite{Bariffi2023} can be used to analyze the entries of a given generator matrix and derive upper bounds on $d_L(\C)$. In this work, however, we focus on bounds that depend only on the length $n$, rank $K$, and alphabet size $q = p^t$.

To frame our discussion, we introduce the following key parameter:

\begin{equation}\label{phi for MLD}
	\Phi(n,K,p^t) = \max\{d_{L}(\C) \mid \C  \subseteq\left(\Z_{p^t} \right)^n \text{ is a linear code of rank }   K\ge 1\}
\end{equation} 

When $t=1$, the rank of the code coincides with its (classical) dimension. To ensure clarity, in the sequel we will use $k$ to denote dimension and $K$ to denote rank. A code $\C \subseteq\left(\Z_{p^t} \right)^n$ of rank $K$ satisfying $d_L(\C) =\Phi(n,K,p^t)$ is called an MLDR (Maximum Lee Distance with respect to Rank) code \cite{Dougherty}. We note that finding and classifying MLRD codes (not necessarily over $\Z_{p^t}$) is a challenging problem, even if one restricts to the case of rings of order $4$ \cite{MR3497905}.

\subsection{Some Established Bounds on $\Phi(n,K,p^t)$, $p$ Prime}\label{subsec: Established bounds}

One of the earliest bounds on the minimum Lee distance of codes was established in response to a conjecture of Lee \cite{Lee1958}, proved in 1968 by Wyner and Graham \cite{Wyner1968}. Their result, when stated in the context of codes over $\Z_{p^t}$, is as follows:

\begin{theorem}[\cite{Wyner1968}] \label{thm: Wyner and Graham}
	Let $p$ be prime. 	If $\C \subseteq\left(\Z_{q}\right)^n$, $q=p^t$ then 
	\[
	d_L(\C) \le n\cdot \mu_q \cdot \frac{q-1}{q}\cdot \frac{|\C|}{|\C|-1}. 
	\]	 	
\end{theorem}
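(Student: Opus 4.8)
The plan is to prove this by a Plotkin-type averaging argument that exploits the linearity of $\C$. Since $\C$ is a $\Z_q$-module, its minimum Lee distance equals its minimum nonzero Lee weight, and the minimum of a finite set is at most its average; as the zero codeword contributes nothing to the total weight, this gives
\[
d_L(\C)\,(|\C|-1)\ \le\ \sum_{c\in\C} w_L(c).
\]
Everything then reduces to bounding the total Lee weight $\sum_{c\in\C}w_L(c)$ from above, and the target bound will follow by dividing through by $|\C|-1$.

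First I would interchange summation and argue coordinatewise, writing $\sum_{c\in\C}w_L(c)=\sum_{i=1}^n\sum_{c\in\C}w_L(c_i)$. For each $i$, the projection $\pi_i\colon\C\to\Z_q$, $c\mapsto c_i$, is a module homomorphism whose image $\C_i:=\pi_i(\C)$ is a subgroup of the cyclic group $\Z_q$, hence $\C_i=\vspan{p^{j_i}}$ for some $0\le j_i\le t$. By a coset (first isomorphism theorem) argument, every element of $\C_i$ is attained as the $i$-th coordinate of exactly $|\C|/|\C_i|$ codewords, so that
\[
\sum_{c\in\C}w_L(c_i)=\frac{|\C|}{|\C_i|}\sum_{a\in\C_i}w_L(a).
\]

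The crux, which I expect to be the main obstacle, is the following claim: \emph{for every subgroup $H\le\Z_q$, the average Lee weight over $H$ is at most the average Lee weight over all of $\Z_q$}, i.e.
\[
\frac{1}{|H|}\sum_{a\in H}w_L(a)\ \le\ \frac{1}{q}\sum_{a\in\Z_q}w_L(a)=\mu_q\cdot\frac{q-1}{q},
\]
where the last equality uses that $\Z_q$ has $q-1$ nonzero elements of average Lee weight $\mu_q$. To establish this I would write $H=\vspan{p^{j}}$, set $m=q/p^{j}=p^{t-j}=|H|$, and invoke the scaling identity $w_L(p^{j}k)=p^{j}\,w_L^{\Z_m}(k)$ for $0\le k<m$, which holds because $q-p^{j}k=p^{j}(m-k)$. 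This yields $\sum_{a\in H}w_L(a)=p^{j}(m-1)\mu_m$, and substituting the closed forms of $\mu_m$ and $\mu_q$ in the even and odd cases reduces the desired inequality to the elementary statement $p^{2j}\ge 1$, with equality precisely when $j=0$ (and, when $p=2$, throughout).

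Finally I would assemble the pieces: feeding the subgroup bound into the coordinatewise expression gives $\sum_{c\in\C}w_L(c_i)\le |\C|\,\mu_q\frac{q-1}{q}$ for each $i$, and summing over the $n$ coordinates yields $\sum_{c\in\C}w_L(c)\le n\,|\C|\,\mu_q\frac{q-1}{q}$. Combining this with the averaging inequality and dividing by $|\C|-1$ produces
\[
d_L(\C)\ \le\ n\cdot\mu_q\cdot\frac{q-1}{q}\cdot\frac{|\C|}{|\C|-1},
\]
as required.
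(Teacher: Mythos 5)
The paper offers no proof of this theorem at all --- it is quoted directly from Wyner and Graham \cite{Wyner1968} --- so your argument has to be judged on its own terms, and on those terms it proves less than what is stated. The statement here (like the original Wyner--Graham result) concerns an \emph{arbitrary} subset $\C \subseteq (\Z_q)^n$: note that, unlike every other theorem in that section of the paper, no rank, free rank, or type hypothesis appears, so no linearity is assumed. Your opening move, ``since $\C$ is a $\Z_q$-module, its minimum Lee distance equals its minimum nonzero Lee weight,'' silently adds that hypothesis, and everything downstream depends on it: the projections $\pi_i$ being homomorphisms, the images being subgroups $\vspan{p^{j_i}}$, and the fibers being equal-size cosets all require $\C$ to be closed under subtraction. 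For a nonlinear code the argument collapses at the first line, since $d_L(\C)$ is a minimum over differences $c-c'$ that need not lie in $\C$, so it cannot be compared with the average weight of $\C$.

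To prove the theorem as stated you would instead run the Plotkin-type count over ordered pairs: $M(M-1)\,d_L(\C) \le \sum_{c \ne c'} d_L(c,c') = \sum_{i=1}^{n}\sum_{a,b\in\Z_q} n_{i,a}\,n_{i,b}\,w_L(a-b)$, where $n_{i,a}$ is the number of codewords with symbol $a$ in coordinate $i$, and then prove the key lemma that for any nonnegative integers $(n_a)$ with $\sum_a n_a = M$ one has $\sum_{a,b} n_a n_b\, w_L(a-b) \le \frac{M^2}{q}\sum_{d\in\Z_q} w_L(d) = M^2\,\mu_q\,\frac{q-1}{q}$, i.e.\ that the uniform symbol distribution maximizes the average pairwise Lee distance. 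That lemma (provable, for instance, by writing the cycle metric $w_L$ as a nonnegative combination of cut metrics, or by noting that its nontrivial Fourier coefficients on $\Z_q$ are nonpositive) is the real content of the general theorem, and it is exactly what your subgroup-average computation replaces. To be clear, the linear-case argument you give is complete and correct as far as it goes: the fiber-counting identity and the subgroup-average claim $\frac{1}{|H|}\sum_{a\in H} w_L(a) \le \mu_q\frac{q-1}{q}$ (with equality for every subgroup when $p=2$, and only for $H=\Z_q$ when $p$ is odd) both check out, and since the paper only ever applies this theorem to linear codes (Corollary \ref{cor: Wyner and Graham MLDR}), your proof does cover every use made of it here; it just does not establish the theorem in the generality in which it is stated.
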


\begin{rem}\label{rem: kappa at least $K/t$}
	From equations (\ref{eqn: kappa}) and (\ref{eqn: C isomsm}), if a code $\mathcal{C} \subseteq\left(\Z_{p^t}\right)^n$ has rank $K$, then its size satisfies $|\C|\ge p^K$ (with the possibility of equality). Consequently, if $\kappa$ denotes the $p^t$-type of $\C$, then we have the relation:
	\[
	K \ge \kappa \ge \frac{K}{t}.
	\]
\end{rem} 

By applying Remark \ref{rem: kappa at least $K/t$} to Theorem \ref{thm: Wyner and Graham}, we obtain the following bound on MLDR codes:

\begin{corollary} \label{cor: Wyner and Graham MLDR}
	Let $p$ be prime. 	If $q=p^t$, then 
	\[
	\Phi(n,K,p^t) \le n\cdot \mu_q \cdot \frac{q-1}{q}\cdot \frac{p^K}{p^K-1}.
	\]	 
\end{corollary}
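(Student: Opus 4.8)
The plan is to obtain the stated inequality directly from the Wyner–Graham bound (Theorem \ref{thm: Wyner and Graham}) by controlling the size-dependent factor $\frac{|\C|}{|\C|-1}$ through the rank. First I would fix an arbitrary linear code $\C \subseteq \left(\Z_{p^t}\right)^n$ of rank $K \ge 1$ and apply Theorem \ref{thm: Wyner and Graham} to obtain
\[
d_L(\C) \le n \cdot \mu_q \cdot \frac{q-1}{q} \cdot \frac{|\C|}{|\C|-1}.
\]

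The key observation is that the map $x \mapsto \frac{x}{x-1} = 1 + \frac{1}{x-1}$ is strictly decreasing for $x>1$. By Remark \ref{rem: kappa at least $K/t$}, a rank-$K$ code satisfies $|\C| \ge p^K$, so substituting this lower bound into the decreasing function produces its largest admissible value:
\[
\frac{|\C|}{|\C|-1} \le \frac{p^K}{p^K-1}.
\]
Combining the two displays, every rank-$K$ code $\C$ satisfies $d_L(\C) \le n \cdot \mu_q \cdot \frac{q-1}{q} \cdot \frac{p^K}{p^K-1}$, and since the right-hand side is independent of the particular code, taking the maximum over all rank-$K$ codes as in the definition (\ref{phi for MLD}) of $\Phi(n,K,p^t)$ yields the claim.

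No substantial obstacle is expected, as this is a routine consequence of the preceding results. The only point deserving care is the direction of the monotonicity: because $\frac{x}{x-1}$ is \emph{decreasing}, it is the minimum possible code size $p^K$ — not a maximum — that governs the worst case, so one must check that replacing $|\C|$ by its lower bound preserves the inequality in the correct (upper-bounding) direction. Everything else is direct substitution.
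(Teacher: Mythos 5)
Your proof is correct and follows exactly the paper's route: the paper obtains this corollary by applying Remark \ref{rem: kappa at least $K/t$} (i.e.\ $|\C| \ge p^K$ for a rank-$K$ code) to Theorem \ref{thm: Wyner and Graham}, using precisely the monotonicity of $x \mapsto \frac{x}{x-1}$ that you spell out. Your write-up simply makes explicit the one-line deduction the paper leaves implicit, including the correct handling of the decreasing direction.
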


Shiromoto and Yoshida \cite{Shiromoto2001} provided the following bound, which also follows readily from the Singleton Bound for linear codes over $\Z_{p^t}$.

\begin{lemma}[\cite{Shiromoto2001}]
	Let $p$ be prime. 	If $\mathcal{C} \subseteq\left(\Z_{p^t}\right)^n$ is a code of rank $K$ then 
	\[
	d_L(\C) \le M_L(q) \cdot (n-K+1)
	\]	 
\end{lemma}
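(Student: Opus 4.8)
The plan is to bound the minimum Lee distance via the minimum Hamming distance, using the fact that each nonzero coordinate contributes at most the maximum possible Lee weight $M_L(q)$ of a single symbol.

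First I would take any nonzero codeword $c \in \C$ achieving the minimum Lee weight, so that $w_L(c) = d_L(\C)$. Its Lee weight decomposes as $w_L(c) = \sum_{i=1}^n w_L(c_i)$, where the sum effectively ranges only over the nonzero coordinates of $c$. Each nonzero coordinate $c_i$ satisfies $w_L(c_i) \le M_L(q) = \lfloor q/2 \rfloor$ by definition of $M_L(q)$, while zero coordinates contribute nothing. Hence if $c$ has exactly $w_H(c)$ nonzero coordinates (its Hamming weight), then
\[
d_L(\C) = w_L(c) = \sum_{i\,:\,c_i \neq 0} w_L(c_i) \le M_L(q)\cdot w_H(c).
\]

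Next I would invoke the Singleton Bound for linear codes over $\Z_{p^t}$ (Lemma~\ref{lem: Singleton MDR}), which asserts $d_H(\C) \le n - K + 1$. Since the minimum Hamming weight of a nonzero codeword is at most any particular codeword's Hamming weight, and in particular $d_H(\C) \le w_H(c)$ would go the wrong way, I would instead choose $c$ more carefully: pick a nonzero codeword $c$ of minimum Hamming weight, so $w_H(c) = d_H(\C)$. For this specific $c$ we still have $d_L(\C) \le w_L(c) \le M_L(q)\cdot w_H(c) = M_L(q)\cdot d_H(\C) \le M_L(q)\cdot(n-K+1)$, where the first inequality holds because $d_L(\C)$ is the minimum Lee weight over all nonzero codewords and the last inequality is exactly Lemma~\ref{lem: Singleton MDR}. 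Chaining these yields the claimed bound $d_L(\C) \le M_L(q)\cdot(n-K+1)$.

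The only subtle point—and the step I would watch most carefully—is the choice of which codeword to test: the Lee-minimal codeword need not be Hamming-minimal, so I must evaluate the Lee weight of a \emph{Hamming}-minimal codeword rather than the Lee-minimal one, relying on the fact that $d_L(\C)$ lower-bounds the Lee weight of \emph{every} nonzero codeword. Everything else is a direct combination of the definition of $M_L(q)$ with Lemma~\ref{lem: Singleton MDR}, so no genuine obstacle remains.
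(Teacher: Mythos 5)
Your proof is correct and matches the paper's approach: the paper states this lemma ``follows readily from the Singleton Bound for linear codes over $\Z_{p^t}$'' (Lemma~\ref{lem: Singleton MDR}), which is exactly your argument of bounding the Lee weight of a Hamming-minimal codeword by $M_L(q)\cdot d_H(\C) \le M_L(q)\cdot(n-K+1)$. Your care in choosing a Hamming-minimal (rather than Lee-minimal) codeword is precisely the right subtlety to flag.
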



Refining a bound on linear codes by Shiromoto \cite{Shiromoto2000}, Alderson and Huntemann \cite{Alderson2013} provide the following bound.

\begin{theorem}[\cite{Alderson2013}]\label{maintheoremgeneralAH}
	Let $p$ be prime. 	If $\mathcal{C} \subseteq\left(\Z_{p^t}\right)^n$, $p^t>3$, is a code of $p^t$-type $\kappa<n$ then
	\[d_L(\C)\leq M_L(q) \cdot \left(n-\lfloor \kappa \rfloor \right).\]
\end{theorem}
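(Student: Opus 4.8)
The plan is to exhibit a single nonzero codeword of small Lee weight, assembled from two ingredients: a \emph{support} bound obtained by puncturing, and a per-coordinate conversion from Hamming support to Lee weight. For the conversion, note that each coordinate contributes at most $M_L(q)$ to the Lee weight, so every $c\in\C$ satisfies $w_L(c)\le M_L(q)\cdot w_H(c)$. It therefore suffices to produce a nonzero $c\in\C$ with $w_H(c)\le n-\lfloor\kappa\rfloor$.

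First I would run the puncturing argument. Fix a set $T$ of $\lfloor\kappa\rfloor$ coordinate positions and let $\pi_T\colon\C\to(\Z_{p^t})^{|T|}$ be the restriction to $T$. The image $\pi_T(\C)$ is a submodule of $(\Z_{p^t})^{\lfloor\kappa\rfloor}$, so its $p^t$-type is at most $\lfloor\kappa\rfloor$; by (\ref{eqn: kappa}) this says $\log_{p^t}|\pi_T(\C)|\le\lfloor\kappa\rfloor\le\kappa=\log_{p^t}|\C|$. Whenever this inequality is strict, $\pi_T$ has a nontrivial kernel, and any nonzero $c$ in that kernel vanishes on all of $T$, so $w_H(c)\le n-\lfloor\kappa\rfloor$ and hence $w_L(c)\le M_L(q)(n-\lfloor\kappa\rfloor)$. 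In particular, when $\kappa\notin\Z$ we have $\lfloor\kappa\rfloor<\kappa$, the type must drop for every $T$, and the bound follows at once. Passing first to the socle $S(\C)$ is a convenient reduction here: by the Horimoto--Shiromoto result it shares the length, rank, and minimum distance of $\C$ and satisfies $d_L(\C)\le d_L(S(\C))$, and its entries are multiples of $p^{t-1}$, which simplifies the Lee-weight bookkeeping.

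The hard part is the equality case $\log_{p^t}|\pi_T(\C)|=\lfloor\kappa\rfloor$ for \emph{every} choice of $T$, which occurs precisely for free codes of integral type $\kappa=K$ in an ``MDS-like'' position, where every $\lfloor\kappa\rfloor$-subset of coordinates is an information set. Then no codeword is supported on only $n-\lfloor\kappa\rfloor$ positions, and puncturing yields a codeword of Hamming weight $n-\lfloor\kappa\rfloor+1$, giving only the weaker estimate $M_L(q)(n-\lfloor\kappa\rfloor+1)$. To recover the stated bound one must beat the crude per-coordinate factor $M_L(q)$: taking a minimum-support codeword $c$ and averaging $w_L(\lambda c)$ over the unit multiples $\lambda$ replaces $M_L(q)$ on each coordinate by the strictly smaller average nonzero Lee weight $\mu_q$, so some multiple has Lee weight at most $\mu_q\,(n-\lfloor\kappa\rfloor+1)$. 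The remaining and genuinely delicate step is to verify that the hypothesis $p^t>3$ forces $\mu_q$ to be small enough that this averaged quantity does not exceed $M_L(q)(n-\lfloor\kappa\rfloor)$; balancing the $\mu_q$-average against the $M_L(q)$ per-coordinate bound, and handling the smallest values of $n-\lfloor\kappa\rfloor$ with care, is where I expect the main difficulty to lie.
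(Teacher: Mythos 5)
Note first that the paper contains no proof of this statement: it is quoted verbatim from \cite{Alderson2013}, so your attempt can only be judged against the claim itself. Your first half is sound: if the projection onto some $\lfloor\kappa\rfloor$-set $T$ is non-injective, a nonzero codeword in its kernel has Hamming weight at most $n-\lfloor\kappa\rfloor$, hence Lee weight at most $M_L(q)(n-\lfloor\kappa\rfloor)$, and non-integrality of $\kappa$ forces this for every $T$. (One caveat: the socle reduction you call ``convenient'' actually undermines this argument. The socle of $\C$ has the same rank $K$, but its $p^t$-type is $K/t$, which is in general strictly smaller than $\kappa$; pigeonholing on the socle therefore yields only the weaker bound $M_L(q)(n-\lfloor K/t\rfloor)$.)

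The step you flag as ``genuinely delicate'' is, however, not delicate but impossible, and this is the gap. Writing $m=n-\lfloor\kappa\rfloor$, your plan requires $\mu_q(m+1)\le M_L(q)\, m$; for odd $q$ and $m=1$ this reads $(q+1)/2\le (q-1)/2$, which is false, and flooring cannot rescue it because $(q+1)/2$ is already an integer. Indeed, the statement as printed fails in exactly this regime: the code $\vspan{(1,2)}\subseteq(\Z_5)^2$ has $p^t=5>3$ and type $\kappa=1<n=2$, yet all four nonzero codewords have Lee weight $3>M_L(5)\cdot(2-1)=2$. (The original theorem of \cite{Alderson2013} carries the additional hypothesis $\kappa>1$, dropped in the present transcription, so no proof of the statement as printed can exist.) Even granting $\kappa\ge 2$, the case $n=\kappa+1$ of an MDS-like code cannot be closed by averaging scalar multiples of a single codeword; it needs structural input about long MDS codes --- Bush's theorem and coincidences among the Lee weights of the entries --- exactly as in parts \ref{pt: phi bush 2} and \ref{pt: phi bush 3} of Lemma \ref{lem: properties of Phi}. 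Separately, your averaging claim itself breaks over $\Z_{p^t}$ with $t>1$: when a coordinate $c_i$ is a nonzero non-unit, its unit multiples need not average to at most $\mu_q$ (in $\Z_9$ the unit multiples of $3$ are $\{3,6\}$, with average Lee weight $3>\mu_9=2.5$), so the intermediate bound $\mu_q(n-\kappa+1)$ you invoke is really Theorem \ref{ChiangWolfgeneral}, whose proof requires a different argument than the one you sketch.
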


With reference to the Remark \ref{rem: kappa at least $K/t$}, Theorem \ref{maintheoremgeneralAH} provides the following bound pertaining to MLDR codes.

\begin{corollary}\label{cor: maintheoremgeneralAH}
	Let $p$ be prime. 	If $p^t>3$ and $n>K$, then
	\[
	\Phi(n,K,p^t) \le M_L(q)\cdot \left(n-\left\lfloor\frac{K}{t}\right\rfloor\right).
	\]	
\end{corollary}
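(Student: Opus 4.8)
The plan is to derive this as a direct consequence of Theorem~\ref{maintheoremgeneralAH} combined with Remark~\ref{rem: kappa at least $K/t$}. I would fix an arbitrary linear code $\C \subseteq \left(\Z_{p^t}\right)^n$ of rank $K$, write $\kappa$ for its $p^t$-type, and aim to bound $d_L(\C)$ by a quantity depending only on $n$, $K$, $t$, and $q$. Since such a bound would be uniform over all rank-$K$ codes, it will immediately pass to the maximum $\Phi(n,K,p^t)$ appearing in (\ref{phi for MLD}).

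First I would verify the hypothesis $\kappa < n$ required by Theorem~\ref{maintheoremgeneralAH}. Remark~\ref{rem: kappa at least $K/t$} gives $\kappa \le K$, and the standing assumption $n > K$ then yields $\kappa \le K < n$. Thus the theorem applies and produces
\[
d_L(\C) \le M_L(q)\cdot\left(n-\floor{\kappa}\right).
\]

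Next I would compare $\floor{\kappa}$ with $\floor{\frac{K}{t}}$. The same remark supplies the lower bound $\kappa \ge \frac{K}{t}$, and because the floor function is monotone nondecreasing, this gives $\floor{\kappa} \ge \floor{\frac{K}{t}}$. Substituting into the previous display and using $M_L(q) > 0$,
\[
d_L(\C) \le M_L(q)\cdot\left(n-\floor{\kappa}\right) \le M_L(q)\cdot\left(n-\floor{\frac{K}{t}}\right).
\]

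Finally, since $\C$ was an arbitrary rank-$K$ code and the right-hand side is independent of the choice of $\C$, taking the maximum over all such codes yields the claimed inequality $\Phi(n,K,p^t) \le M_L(q)\cdot\left(n-\floor{\frac{K}{t}}\right)$. I do not anticipate any substantive obstacle here: the entire content is already carried by Theorem~\ref{maintheoremgeneralAH} and Remark~\ref{rem: kappa at least $K/t$}, and the argument reduces to two monotonicity observations. The only point needing a moment's care is confirming that $\kappa < n$, so that the theorem is genuinely applicable; but this is immediate from $\kappa \le K < n$.
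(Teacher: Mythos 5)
Your proof is correct and follows essentially the same route as the paper: the corollary is obtained there precisely by applying Theorem \ref{maintheoremgeneralAH} in combination with Remark \ref{rem: kappa at least $K/t$}, exactly as you do. The details you add---checking $\kappa \le K < n$ so the theorem's hypothesis is satisfied, and using monotonicity of the floor to pass from $\left\lfloor \kappa \right\rfloor$ to $\left\lfloor K/t \right\rfloor$ before maximizing over all rank-$K$ codes---are just the steps the paper leaves implicit.
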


The following bound, established by Chiang and Wolf \cite{MR0342262}, is particularly effective when applied to free codes. 

\begin{theorem}[\cite{MR0342262}]\label{thm: ChiangWolf}
	Let $p$ be prime. 	If $\mathcal{C} \subseteq\left(\Z_{p^t}\right)^n$ is a code with free rank $ k_1$, then
	\[
	d_L(\C) \leq \mu_{p^t} (n-k_1+1).
	\]
\end{theorem}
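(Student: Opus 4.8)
The plan is to prove the bound by an averaging argument over a single rank-one free subcode whose codewords are supported on only $n-k_1+1$ coordinates, reducing everything to a pointwise inequality for Lee-weight sums over the ideals of $\Z_{p^t}$. Throughout write $q=p^t$, assume the free part is nonempty (i.e. $k_1\ge 1$), and let $T(q)=\sum_{a\in\Z_q}w_L(a)$ denote the total Lee weight of $\Z_q$; directly from the definition of $\mu_{p^t}$ one has $T(q)=(q-1)\mu_{p^t}$.

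First I would isolate one free row. Placing $\C$ in the systematic form of Section \ref{sec: Prelim}, the first $k_1$ rows constitute the free part and carry unit pivots in columns $1,\dots,k_1$. Let $g$ be the $k_1$-th such row: its pivot sits in column $k_1$ and its first $k_1-1$ entries vanish, so $\mathrm{supp}(g)\subseteq\{k_1,k_1+1,\dots,n\}$, a set of size $n-k_1+1$. The cyclic submodule $D=\{m g : m\in\Z_q\}$ is free of rank one, has order $q$, and each of its nonzero elements is a nonzero codeword of $\C$. Since $d_L(\C)$ is the least Lee weight of a nonzero codeword, it is bounded by the average Lee weight over $D\setminus\{0\}$:
\[
d_L(\C)\le \frac{1}{q-1}\sum_{m\in\Z_q}w_L(m g)=\frac{1}{q-1}\sum_{j=1}^{n}\ \sum_{m\in\Z_q}w_L\!\big(m\,g_j\big),
\]
where the inner sums over the columns $j\in\{1,\dots,k_1-1\}$ vanish, leaving at most $n-k_1+1$ surviving contributions.

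The crux is the pointwise claim that for every $a\in\Z_q$ one has $\sum_{m\in\Z_q}w_L(m a)\le T(q)=(q-1)\mu_{p^t}$, with equality when $a$ is a unit (in particular at the pivot column $j=k_1$, where $g_{k_1}=1$). To establish it I would write $a=p^s u$ with $u$ a unit, so that as $m$ runs over $\Z_q$ the product $ma$ runs over the ideal $\langle p^s\rangle$, each value attained exactly $p^s$ times. Using the scaling identity $w_L(p^s\,i)=p^s\,w_L(i)$ (the latter weight computed in $\Z_{p^{t-s}}$), the sum collapses to $p^{2s}\,T(p^{t-s})$. Comparing with $T(q)$ via the explicit totals $T(p^r)=(p^{2r}-1)/4$ for $p$ odd and $T(2^r)=2^{2r-2}$ for $p=2$ finishes the claim: the odd case gives $(p^{2t}-p^{2s})/4\le(p^{2t}-1)/4$, while the even case yields exact equality for every $s$. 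This last verification is the main obstacle, and it also explains the tightness of the bound, since for $p=2$ every ideal saturates the inequality.

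Finally I would assemble the pieces. Bounding each of the $n-k_1+1$ surviving inner sums by $(q-1)\mu_{p^t}$ gives $\sum_{m\in\Z_q}w_L(mg)\le (n-k_1+1)(q-1)\mu_{p^t}$, and dividing by $q-1$ yields $d_L(\C)\le \mu_{p^t}\,(n-k_1+1)$, as desired.
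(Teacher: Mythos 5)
Your proof is correct, and there is an important contextual point: the paper contains no proof of Theorem \ref{thm: ChiangWolf} at all --- it is quoted from Chiang and Wolf \cite{MR0342262} --- so your proposal has to stand as a self-contained reconstruction, which it does. Your skeleton (isolate the $k_1$-th free row $g$ of the systematic generator matrix, whose support has size at most $n-k_1+1$ and whose unit pivot makes $\vspan{g}$ free of rank one; average $w_L$ over its $q-1$ nonzero multiples; reduce to the column-wise claim $\sum_{m\in\Z_q} w_L(ma)\le (q-1)\mu_q$ for every $a\in\Z_q$) is exactly the classical Chiang--Wolf argument, and your proof of the pointwise claim is sound: writing $a=p^su$ with $u$ a unit, the products $ma$ sweep the ideal $\vspan{p^s}$ with uniform multiplicity $p^s$, the scaling identity $w_L(p^si)=p^s\,w_L(i)$ collapses the sum to $p^{2s}T(p^{t-s})$, and the totals $T(p^r)=(p^{2r}-1)/4$ for $p$ odd and $T(2^r)=2^{2r-2}$ for $r\ge 1$ give the inequality (the only nitpick is that ``exact equality for every $s$'' when $p=2$ should exclude the trivial case $s=t$, i.e.\ $a=0$, where the sum is $0$). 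It is worth noting how this relates to the averaging the paper does perform in its own proofs (Lemma \ref{lem:LDimpliesMDS}, Theorem \ref{thm: main theorem bound}): there, over $\Z_p$, each nonzero column value of a multiple of a fixed codeword appears exactly once, so no ideal analysis is needed; your computation over the ideals $\vspan{p^s}$ is precisely what replaces that fact over $\Z_{p^t}$, where zero divisors destroy the ``each value once'' property. Finally, your standing assumption $k_1\ge 1$ is genuinely necessary rather than cosmetic: for $k_1=0$ the stated bound can fail (e.g.\ the code generated by $(2,2,2)$ over $\Z_4$ has $d_L=6>\tfrac{4}{3}\cdot 4$), and your reading matches the intended meaning of free rank in the theorem.
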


Since free rank and rank coincide when working over $\Z_p$, Theorem \ref{thm: ChiangWolf} provides the following bound on MLDR codes. 

\begin{corollary}\label{cor: Chiang and Wolf}
	If $p$ is prime, then 	
	\[
	\Phi(n,K,p) \le \mu_p(n-K+1).
	\]
\end{corollary}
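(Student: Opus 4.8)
The plan is to derive this directly from the Chiang--Wolf bound (Theorem \ref{thm: ChiangWolf}) by specializing to the case $t=1$. The single observation that makes everything work is that over $\Z_p$ a linear code is a vector space over the field $\F_p$, so the module decomposition in (\ref{eqn: C isomsm}) collapses to a single factor $\C \cong (\Z_p)^{k_1}$; there are no summands $(\Z_{p^{t-j}})^{k_{j+1}}$ with $j\ge 1$ to contribute. Hence the free rank $k_1$ and the rank $K=\sum_i k_i$ coincide, both equalling the classical dimension.

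Granting this, the argument is immediate. First I would fix an arbitrary linear code $\C \subseteq (\Z_p)^n$ of rank $K$ and apply Theorem \ref{thm: ChiangWolf} with $t=1$, so that $p^t=p$ and the free rank satisfies $k_1 = K$. This yields $d_L(\C) \le \mu_p(n-K+1)$ for this particular code.

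Since the preceding inequality holds for \emph{every} rank-$K$ linear code over $\Z_p$, I would then take the maximum over all such codes. By the definition of $\Phi$ in (\ref{phi for MLD}), the left-hand side becomes $\Phi(n,K,p)$ while the right-hand side is unchanged, giving the claimed bound.

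There is essentially no obstacle here: the corollary is a genuine specialization rather than a new result, and the only thing requiring justification---that free rank equals rank when $t=1$---is a consequence of $\Z_p$ being a field. The only point worth flagging is notational consistency, namely that $\mu_{p^t}$ in Theorem \ref{thm: ChiangWolf} reduces to $\mu_p$ and that $n-k_1+1$ reduces to $n-K+1$ under the identification $k_1=K$; both reductions are immediate.
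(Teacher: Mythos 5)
Your proposal is correct and follows exactly the paper's own route: the paper justifies this corollary with the single observation that free rank and rank coincide over $\Z_p$ (i.e., the decomposition in (\ref{eqn: C isomsm}) collapses when $t=1$), and then applies Theorem \ref{thm: ChiangWolf} and maximizes over all rank-$K$ codes, just as you do.
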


Alderson and Huntemann \cite{Alderson2013} later provided the following refinement of Theorem \ref{thm: ChiangWolf}, extending it to the setting of integral $p^t$-type codes.

\begin{theorem}[\cite{Alderson2013}]\label{ChiangWolfgeneral}
	Let $p$ be prime. 	If $\mathcal{C} \subseteq\left(\Z_{p^t}\right)^n$ is a code of integral $p^t$-type $\kappa$, then
	\[
	d_L(\C) \leq \mu_{p^t} (n-\kappa+1).
	\]
\end{theorem}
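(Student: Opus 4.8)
The plan is to combine a shortening (coordinate-restriction) argument in the spirit of the Singleton bound with the elementary fact that the minimum nonzero weight of a code is at most its average nonzero weight. Fix any $\kappa-1$ coordinate positions and let $\C_0\subseteq\C$ be the subcode of all codewords vanishing on those positions, regarded as supported on the remaining $n-\kappa+1$ positions. Since $\C$ has integral $p^t$-type $\kappa$, we have $|\C|=q^\kappa$, and the projection onto the chosen $\kappa-1$ positions has image of size at most $q^{\kappa-1}$; hence by the first isomorphism theorem $|\C_0|=|\C|/|\mathrm{im}|\ge q^\kappa/q^{\kappa-1}=q$. In particular $\C_0\neq\{0\}$, and because $\C_0\subseteq\C$ every nonzero word of $\C_0$ is a nonzero word of $\C$, so $d_L(\C)\le\min_{0\neq c\in\C_0}w_L(c)$.

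First I would bound this minimum by the average nonzero Lee weight over $\C_0$,
\[
d_L(\C)\le\frac{1}{|\C_0|-1}\sum_{0\neq c\in\C_0}w_L(c)=\frac{1}{|\C_0|-1}\sum_{i}\sum_{c\in\C_0}w_L(c_i),
\]
where the index $i$ runs over the (at most) $n-\kappa+1$ support positions. For each such position, the coordinate projection $c\mapsto c_i$ is a $\Z_q$-module homomorphism onto a cyclic subgroup $H_i=\vspan{p^{j_i}}\le\Z_q$ with fibers of constant size $|\C_0|/|H_i|$; therefore $\sum_{c\in\C_0}w_L(c_i)=\frac{|\C_0|}{|H_i|}\sum_{a\in H_i}w_L(a)$. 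It then suffices to show that each of the resulting terms is at most $\mu_q$, so that the total is at most $\mu_q(n-\kappa+1)$.

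The crux is thus the per-coordinate estimate
\[
\frac{|\C_0|}{|\C_0|-1}\cdot\frac{1}{|H_i|}\sum_{a\in H_i}w_L(a)\le\mu_q .
\]
Using $|\C_0|\ge q$ (so that $\tfrac{|\C_0|}{|\C_0|-1}\le\tfrac{q}{q-1}$) together with the identities $\mu_q=\frac{1}{q-1}\sum_{a\in\Z_q}w_L(a)$ and $\sum_{a\in\vspan{p^{j}}}w_L(a)=p^{j}\sum_{b\in\Z_{p^{t-j}}}w_L(b)=p^{j}(p^{t-j}-1)\mu_{p^{t-j}}$, this reduces to the purely arithmetic inequality
\[
p^{2j}(p^{t-j}-1)\,\mu_{p^{t-j}}\le(p^t-1)\,\mu_{p^t},\qquad 0\le j\le t,
\]
which I would verify directly from the closed forms for $\mu_{p^m}$ in the even and odd cases: the case $H_i=\{0\}$ (i.e.\ $j=t$) contributes $0$ and is trivial, $H_i=\Z_q$ (i.e.\ $j=0$) gives equality, and for $1\le t-j$ the odd case yields slack $(p^{2t}-p^{2j})/4\le(p^{2t}-1)/4$ while the even case gives equality. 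I expect this last inequality to be the main obstacle, since it is the one place where the precise value of $\mu_q$ and the subgroup structure of $\Z_{p^t}$ genuinely interact; the remaining steps are bookkeeping. Chaining the three displays yields $d_L(\C)\le\mu_{p^t}(n-\kappa+1)$, as required.
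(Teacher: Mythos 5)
This theorem is quoted from \cite{Alderson2013}; the present paper gives no proof of it, so there is no internal argument to compare against, and your proposal has to be judged on its own merits. Within the setting of \emph{linear} codes it is correct and complete. The shortening step is sound: the projection of $\C$ onto the chosen $\kappa-1$ coordinates is a module homomorphism with image of size at most $q^{\kappa-1}$, so its kernel $\C_0$ has size at least $q$, and $d_L(\C)$ is at most the average nonzero Lee weight of $\C_0$. The per-coordinate decomposition is also sound: each coordinate functional on $\C_0$ has image an ideal $H_i=\vspan{p^{j_i}}$ of $\Z_q$ with fibers of constant size, and your reduction to the inequality $p^{2j}\left(p^{t-j}-1\right)\mu_{p^{t-j}} \le \left(p^t-1\right)\mu_{p^t}$ is algebraically exact (the factor $p^t/p^{t-j}=p^j$ is what produces the $p^{2j}$). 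I verified that inequality: for odd $p$ it reads $\left(p^{2t}-p^{2j}\right)/4 \le \left(p^{2t}-1\right)/4$, and for $p=2$ both sides equal $2^{2t}/4$, exactly as you claim. In effect you have reconstructed the Chiang--Wolf argument \cite{MR0342262} and extended it to non-free linear codes over $\Z_{p^t}$; the subgroup computation, which you correctly identify as the crux, is precisely the new ingredient that the non-free case requires, since coordinates of $\C_0$ may project onto proper ideals rather than all of $\Z_q$.

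The one caveat is scope. As the introduction of the paper points out, the result of \cite{Alderson2013} holds for \emph{not necessarily linear} codes of integral $p^t$-type, and in the paper's conventions an unqualified ``code'' is an arbitrary collection of $n$-tuples, while a ``linear code'' is a submodule. Your proof uses linearity in an essential way: $\C_0$ is a kernel, its fibers have constant size, its coordinate images are subgroups, and $d_L(\C)$ is identified with the minimum nonzero weight. To cover the general case one would instead choose, by pigeonhole, a fiber of the projection containing $m \ge q$ codewords, bound $d_L(\C)$ by the average pairwise Lee distance within that fiber, and replace your subgroup identity with a bound on the sum of pairwise Lee distances of an arbitrary multiset of $m$ elements of $\Z_q$ (the per-coordinate lemma underlying Theorem \ref{thm: Wyner and Graham} gives a sum at most $m^2\mu_q(q-1)/q$, and $m\ge q$ then yields an average at most $\mu_q$). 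So your argument fully establishes the theorem in the form in which this paper actually uses it --- Corollary \ref{cor: ChiangWolfgeneral} and the bound on $\Phi$ concern linear codes only --- but not the full strength of the cited statement.
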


Using Remark \ref{rem: kappa at least $K/t$}, we obtain the following corollary to Theorem \ref{ChiangWolfgeneral}, which generalizes Corollary \ref{cor: Chiang and Wolf}.

\begin{corollary}\label{cor: ChiangWolfgeneral} 
	Let $p$ be prime. 	If $q=p^t$, then	
	\[
	\Phi(n,K,q) \le \mu_q \left(n-\left\lfloor\frac{K}{t}\right\rfloor+1\right).
	\]
\end{corollary}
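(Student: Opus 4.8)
The plan is to reduce the statement to Theorem \ref{ChiangWolfgeneral} by passing to a subcode of integral $p^t$-type. Since $\Phi(n,K,p^t)$ is a maximum over all rank-$K$ codes, it suffices to fix an arbitrary linear code $\C\subseteq(\Z_{p^t})^n$ of rank $K$ and show that $d_L(\C)\le \mu_q(n-\lfloor K/t\rfloor+1)$; the bound on $\Phi$ then follows by taking the maximum over all such $\C$. By Remark \ref{rem: kappa at least $K/t$}, the $p^t$-type $\kappa$ of $\C$ satisfies $\kappa\ge K/t$, whence $\lfloor\kappa\rfloor\ge\lfloor K/t\rfloor$. The temptation is to apply Theorem \ref{ChiangWolfgeneral} directly to $\C$, but that theorem requires $\kappa$ to be an integer, which a generic rank-$K$ code need not satisfy.

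To circumvent this, I would extract from $\C$ a subcode of integral type. Two observations make this clean. First, because the $\Z_{p^t}$-module structure on $\C$ is generated by the single ring element $1$, any additive subgroup of $\C$ is automatically a $\Z_{p^t}$-submodule, hence a genuine subcode. Second, $\C$ is a finite abelian $p$-group of order $|\C|=p^{t\kappa}$, and such a group possesses a subgroup of every order dividing $|\C|$. Since $\lfloor K/t\rfloor\le K/t\le \kappa$ gives $t\lfloor K/t\rfloor\le t\kappa=\log_p|\C|$, we may choose a subcode $\C'\subseteq\C$ with $|\C'|=p^{t\lfloor K/t\rfloor}$; by construction $\C'$ has integral $p^t$-type $\lfloor K/t\rfloor$. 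Theorem \ref{ChiangWolfgeneral} now applies to $\C'$ and yields $d_L(\C')\le\mu_q(n-\lfloor K/t\rfloor+1)$. Finally, every nonzero codeword of $\C'$ is a nonzero codeword of $\C$, so minimizing Lee weight over the larger set gives $d_L(\C)\le d_L(\C')$, and the desired inequality follows.

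The step I expect to be the main obstacle is precisely the non-integrality of $\kappa$: the whole argument hinges on producing an integral-type subcode that is both a submodule and does not lose Lee distance relative to $\C$. The submodule and subgroup-existence facts secure the former, while the monotonicity $d_L(\C)\le d_L(\C')$ for subcodes secures the latter. The one case requiring separate and careful attention is the boundary $\lfloor K/t\rfloor=0$ (equivalently $1\le K<t$), where the prescribed subcode is trivial and carries no nonzero codeword, so the reduction degenerates; here one must establish the bound $\mu_q(n+1)$ by other means. This case warrants genuine verification, since a low-rank code whose generator has small additive order — for instance the rank-one code generated by the vector all of whose entries equal $p^{t-1}$ — can carry large Lee weight, and I would treat it on its own, either by invoking one of the earlier bounds of Section \ref{subsec: Established bounds} or by a direct coordinate-wise estimate.
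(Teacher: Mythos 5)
Your subcode reduction is valid and, where it applies, is actually more careful than the paper's own justification: the paper derives this corollary in a single line by combining Remark \ref{rem: kappa at least $K/t$} with Theorem \ref{ChiangWolfgeneral}, passing silently over exactly the point you isolate, namely that the theorem requires \emph{integral} $p^t$-type while a rank-$K$ code over $\Z_{p^t}$ generally has non-integral type. Your three ingredients are all correct: every additive subgroup of a $\Z_{p^t}$-module is a submodule (scalar multiplication is repeated addition), a finite abelian $p$-group of order $p^{t\kappa}$ contains a subgroup of order $p^{t\lfloor K/t\rfloor}$ since $t\lfloor K/t\rfloor\le t\kappa$, and $d_L(\C)\le d_L(\C')$ for any nontrivial subcode $\C'\subseteq\C$. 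So your argument genuinely proves the stated bound whenever $\lfloor K/t\rfloor\ge 1$, that is, whenever $K\ge t$.

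The case $K<t$ that you flag and leave open, however, is not a closable gap: the statement is false in that regime, and the code you singled out as suspicious is precisely a counterexample. Take $\C=\vspan{(p^{t-1},p^{t-1},\ldots,p^{t-1})}\subseteq(\Z_{p^t})^n$, which has rank $K=1$ and $d_L(\C)=np^{t-1}$. For $q=4$ and $n=3$ this gives $d_L(\C)=6>\tfrac{4}{3}\cdot 4=\mu_4(n+1)$, and for $q=9$ and $n\ge 6$ it gives $d_L(\C)=3n>\tfrac{5}{2}(n+1)=\mu_9(n+1)$. No appeal to the earlier bounds of Section \ref{subsec: Established bounds} can rescue this case: for instance, Theorem \ref{thm: ByrneWeger} and Corollary \ref{cor: Wyner and Graham MLDR} both evaluate to $6$ on the $q=4$, $n=3$ example, above $\mu_4(n+1)=16/3$, and those bounds are correct precisely because this code attains them. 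The clean conclusion of your analysis is that the corollary (and the paper's one-line derivation of it) needs the additional hypothesis $K\ge t$, equivalently $\lfloor K/t\rfloor\ge 1$, under which your proof is complete; it is worth noting that all of the paper's numerical comparisons invoking this corollary do satisfy $K\ge t$, so they remain consistent with the repaired statement.
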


More recently, additional bounds on $\Phi(n,K,p^t)$ have been developed. For instance, Bariffi and Weger \cite{Bariffi2023} established the bound  
\[
\Phi(n,K,p^t) \leq p^{t-1} \left\lfloor \frac{p}{2} \right\rfloor (n-K+1).
\]
While this result improves upon some previously known bounds, it is itself subsumed by the following bound established by Byrne and Weger \cite{Byrne2023}.

\begin{theorem}[\cite{Byrne2023}]\label{thm: ByrneWeger} 
	Let $p$ be prime. 	For $q=p^t$,
	\[
	\Phi(n,K,q) \leq p^{t-1} \mu_p (n-K+1).
	\]
\end{theorem}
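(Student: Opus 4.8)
The plan is to reduce the bound for a general linear code over $\Z_{p^t}$ to the already-established Chiang--Wolf bound over $\Z_p$ (Corollary~\ref{cor: Chiang and Wolf}) by passing to the socle $S(\C)=\vspan{p^{t-1}}\cap\C$, i.e. the set of codewords all of whose entries lie in $p^{t-1}\Z_{p^t}$. Two observations drive the argument. First, $S(\C)$ is a subcode of $\C$, so minimizing Lee weight over the smaller set can only increase the value: $d_L(\C)\le d_L(S(\C))$. Second, the socle is, up to a uniform scaling of Lee weight, a copy of a linear code over $\Z_p$, and over $\Z_p$ the notions of free rank, rank, and dimension all coincide, so Corollary~\ref{cor: Chiang and Wolf} applies directly to it.

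Concretely, I would set up the additive group isomorphism $\phi:p^{t-1}\Z_{p^t}\to\Z_p$ given by $jp^{t-1}\mapsto j$ and extend it coordinatewise to $\phi:(p^{t-1}\Z_{p^t})^n\to\Z_p^n$, so that $\C'=\phi(S(\C))$ is a linear code over $\Z_p$ of length $n$. The defining computation is
\[
w_L(jp^{t-1})=\min\{jp^{t-1},\,p^t-jp^{t-1}\}=p^{t-1}\min\{j,\,p-j\}=p^{t-1}\,w_L^{\Z_p}(j),
\]
valid for every $j\in\{0,1,\dots,p-1\}$ and for every prime $p$ (including $p=2$). Summing over coordinates shows that the Lee weight of each socle word is exactly $p^{t-1}$ times the $\Z_p$-Lee weight of its image, whence $d_L(S(\C))=p^{t-1}\,d_L(\C')$.

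Next I would pin down the dimension of $\C'$. Since $S(\C)$ is annihilated by $p$, it is a $\Z_p$-vector space with $\dim_{\Z_p}S(\C)=\dim_{\Z_p}\C'$, and because each cyclic summand $\Z_{p^{t-i+1}}$ in the decomposition (\ref{eqn: C isomsm}) has a one-dimensional minimal submodule $\cong\Z_p$, the socle has $\Z_p$-dimension $\sum_i k_i=K$. One can also read this off the systematic generator matrix: scaling the $k_i$ rows of the $i$-th block by $p^{t-i}$ turns each pivot into $p^{t-1}\neq 0$ and lands the row in $S(\C)$, producing $K$ linearly independent socle words in distinct pivot positions; the Horimoto--Shiromoto result that $\C$ and its socle share the same rank gives the same count. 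Applying Corollary~\ref{cor: Chiang and Wolf} to the rank-$K$ code $\C'\subseteq\Z_p^n$ then yields $d_L(\C')\le\mu_p(n-K+1)$.

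Chaining the inequalities gives
\[
d_L(\C)\le d_L(S(\C))=p^{t-1}\,d_L(\C')\le p^{t-1}\mu_p(n-K+1),
\]
and since this holds for every rank-$K$ code $\C\subseteq(\Z_{p^t})^n$, taking the maximum over such codes produces the claimed bound on $\Phi(n,K,p^t)$. I do not expect a genuine obstacle, as the argument is short once the socle is recognized as a $p^{t-1}$-scaled copy of a $\Z_p$-code; the points requiring care are the uniformity of the scaling factor $p^{t-1}$ across \emph{all} nonzero socle symbols and the exact dimension bookkeeping, namely that each cyclic summand contributes precisely one $\Z_p$-dimension to the socle so that $\dim_{\Z_p}\C'=K$ exactly rather than something smaller.
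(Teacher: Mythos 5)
Your proof is correct and follows essentially the same route the paper itself provides: the socle reduction of Lemma \ref{lem: Phi p^t} (which establishes $\Phi(n,K,p^t)\le p^{t-1}\Phi(n,K,p)$ via exactly your weight-scaling and rank argument) composed with the Chiang--Wolf bound over $\Z_p$ (Corollary \ref{cor: Chiang and Wolf}). The paper does not prove Theorem \ref{thm: ByrneWeger} itself but cites \cite{Byrne2023}; your blind derivation is precisely the combination of the paper's own Lemma \ref{lem: Phi p^t} and Corollary \ref{cor: Chiang and Wolf}, with the socle dimension bookkeeping carried out correctly.
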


It is worth noting that Theorem \ref{thm: ByrneWeger} corrects a minor discrepancy in the original result, which appears as Corollary 32 in \cite{Byrne2023}. 

In the following sections, we present further refinements that, in many cases, improve upon the existing bounds on MLDR codes.

\section{New Bounds}

Before developing new bounds for codes over $\Z_q$, where $q=p^t$, we first restrict our discussion to the case $q=p$. As noted in Section \ref{subsec: Lee Weight}, we shall use the dimension parameter $k$ to represent the rank when $t=1$, since in this case, rank and dimension coincide.

\subsection{Bounds for Linear Codes Over $\Z_p$, $p$ Prime }

\begin{lemma} \label{lem:LDimpliesMDS}
	Let $p$ be prime. 	If $\C$ is a linear $[n,k,d]_p$ code with $d_L(\C) >  \left\lfloor \mu_p(n-k) \right\rfloor$, then 
	$\C$ is an MDS code, so $d = n-k+1$. 
\end{lemma}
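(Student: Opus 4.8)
The plan is to prove the contrapositive: assuming $\C$ is \emph{not} MDS, I would exhibit a nonzero codeword whose Lee weight is at most $\lfloor \mu_p(n-k)\rfloor$, which contradicts the hypothesis $d_L(\C) > \lfloor \mu_p(n-k)\rfloor$. Since $\Z_p = \F_p$ is a field, Lemma \ref{lem: Singleton MDR} (with $K=k$) gives $d_H(\C) \le n-k+1$, and failure to be MDS means $d_H(\C) \le n-k$. So I would start by fixing a nonzero codeword $c$ with Hamming weight $w_H(c) = d_H(\C) \le n-k$.

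The key step is an averaging argument over scalar multiples. For each nonzero scalar $a \in \Z_p^{*}$, linearity gives $ac \in \C$ with the same support as $c$, and
\[
w_L(ac) = \sum_{i \in \mathrm{supp}(c)} w_L(a c_i).
\]
I would sum over all $a \in \{1,\dots,p-1\}$ and interchange the order of summation. For a fixed coordinate $i$ with $c_i \ne 0$, multiplication by $c_i$ is a bijection of $\Z_p^{*}$, so as $a$ ranges over $\Z_p^{*}$ the product $ac_i$ ranges over all of $\Z_p^{*}$; hence $\sum_{a=1}^{p-1} w_L(a c_i) = \sum_{b=1}^{p-1} w_L(b) = (p-1)\mu_p$ by the very definition of $\mu_p$ as the average nonzero Lee weight. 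Therefore
\[
\sum_{a=1}^{p-1} w_L(ac) = w_H(c)\,(p-1)\,\mu_p,
\]
so the average of $w_L(ac)$ over the $p-1$ nonzero multipliers equals $w_H(c)\,\mu_p \le (n-k)\mu_p$.

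Since the minimum of the values $w_L(ac)$ cannot exceed their average, there exists $a \in \Z_p^{*}$ with $w_L(ac) \le (n-k)\mu_p$; and because Lee weight is integer-valued this sharpens to $w_L(ac) \le \lfloor \mu_p(n-k)\rfloor$. As $a$ is invertible and $c \ne 0$, the codeword $ac$ is nonzero, whence $d_L(\C) \le w_L(ac) \le \lfloor \mu_p(n-k)\rfloor$, the desired contradiction. I expect the argument to be essentially frictionless; the only points needing care are invoking the field structure of $\Z_p$ to justify that multiplication permutes $\Z_p^{*}$ (this is exactly where primality of $p$ is used) and the rounding-to-the-floor step, which relies on integrality of the Lee weight. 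It is worth remarking that the threshold $\lfloor \mu_p(n-k)\rfloor$ sits precisely one increment of $\mu_p$ below the Chiang--Wolf ceiling of Corollary \ref{cor: Chiang and Wolf}, which is the natural reason this bound forces optimality.
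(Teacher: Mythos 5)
Your proof is correct and takes essentially the same approach as the paper's: both are averaging arguments over the $p-1$ nonzero scalar multiples of a codeword of Hamming weight at most $n-k$ (the paper phrases this as column sums of the $(p-1)\times n$ array whose rows are the nonzero elements of $\langle x\rangle$). The only cosmetic difference is that by writing each column sum as $(p-1)\mu_p$ instead of the explicit $\frac{p^2-1}{4}$, your version handles $p=2,3$ uniformly, whereas the paper dispatches those cases separately as trivial.
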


\begin{proof}
	If $p=2,3$, the result holds trivially, so assume $p\ge 5$. Suppose $x \in \C$ satisfies $w_H(x) \le n-k$, and consider the $(p-1) \times n$ array whose rows consist of the nonzero elements of $\langle x\rangle$. In each column corresponding to a support position of $x$, each nonzero element of $\Z_p$ appears exactly once. Thus, the total Lee weight of the array is bounded above by
	\[
	(n-k) \cdot \frac{p^2-1}{4}.
	\] 
	The result follows by averaging over the $p-1$ rows.
\end{proof}

\begin{lemma} \label{lem:LDimpliesS(C)}
	Let $p$ be prime. 	Let $\C$ be a linear $ [n,k,d]_p$ code and $s\in \Z^+$. If $d_L(\C)>  \left\lfloor\mu_p(n-k-s)\right\rfloor$, then $d\ge n-k-s+1$.\\
	Equivalently, if $d_L(\C)>  \left\lfloor \mu_p(n-k-s) \right\rfloor$ then  $\operatorname{def}(\C)\le s$.
\end{lemma}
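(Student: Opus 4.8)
The plan is to argue by contrapositive and reuse the averaging argument already established in the proof of Lemma \ref{lem:LDimpliesMDS}, which is precisely the $s=0$ instance of the present statement. First I would dispose of the primes $p=2,3$: in both cases $\mu_p=1$ and the Lee weight coincides with the Hamming weight on $\Z_p$, so $d_L(\C)=d$ and the hypothesis $d_L(\C)>\lfloor\mu_p(n-k-s)\rfloor=n-k-s$ reads $d>n-k-s$, which is exactly the conclusion. Hence I may assume $p\ge 5$.

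For $p\ge 5$ I would establish the contrapositive of the first formulation: assuming $d\le n-k-s$, I would exhibit a nonzero codeword of Lee weight at most $\lfloor(n-k-s)\mu_p\rfloor$, contradicting the hypothesis. Let $x\in\C$ be a codeword of minimum Hamming weight, so $w_H(x)=d\le n-k-s$. Forming the $(p-1)\times n$ array whose rows are the nonzero scalar multiples of $x$, the key observation (identical to that in Lemma \ref{lem:LDimpliesMDS}) is that in each of the $w_H(x)$ support columns the nonzero elements of $\Z_p$ each occur exactly once, contributing total Lee weight $\frac{p^2-1}{4}$ per column. Summing over columns and averaging over the $p-1$ rows produces a nonzero multiple of $x$ of Lee weight at most $w_H(x)\cdot\frac{p^2-1}{4(p-1)}=w_H(x)\,\mu_p\le(n-k-s)\mu_p$.

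Since Lee distances are integers, this forces $d_L(\C)\le\lfloor(n-k-s)\mu_p\rfloor$, contradicting the hypothesis $d_L(\C)>\lfloor\mu_p(n-k-s)\rfloor$; therefore $d\ge n-k-s+1$. The equivalent formulation follows at once from $\operatorname{def}(\C)=n-k+1-d$, since $d\ge n-k-s+1$ is literally the inequality $\operatorname{def}(\C)\le s$.

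I do not anticipate a genuine obstacle here: the substantive content—the column-by-column Lee-weight count and the averaging over scalar multiples—is already carried out in Lemma \ref{lem:LDimpliesMDS}. The only new ingredient is to apply that estimate to a minimum-weight codeword (of weight $d$) rather than to a hypothetical codeword of weight $\le n-k$, so the argument reduces to bookkeeping. The one point requiring mild care is the interaction of the floor function with the integrality of $d_L(\C)$: because $d_L(\C)$ is an integer, the bound $d_L(\C)\le(n-k-s)\mu_p$ already yields $d_L(\C)\le\lfloor(n-k-s)\mu_p\rfloor$, which is exactly what makes the strict inequality in the hypothesis the correct threshold.
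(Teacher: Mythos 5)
Your proof is correct and takes essentially the same approach as the paper's: the paper likewise disposes of $p=2,3$ trivially, then for $p\ge 5$ applies the scalar-multiple array and column-averaging argument of Lemma \ref{lem:LDimpliesMDS} to a codeword of Hamming weight at most $n-k-s$, framed as a contradiction rather than your (logically equivalent) contrapositive. Your explicit handling of the floor/integrality point is a minor touch of extra care that the paper leaves implicit.
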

\begin{proof}
	If $p=2,3 $ then the result holds trivially, so assume $p\ge5$. By way of contradiction suppose $\C$ has $d_L(\C)> \mu_p(n-k-s)$ and   $x\in \C$ with $w_H(x)\le n-k-s$.  Consider the array whose rows are the nonzero members of $\vspan{x}$.  Counting and averaging the nonzero column weights over the rows, as in the proof of Lemma \ref{lem:LDimpliesMDS} gives the contradiction $d_L(\C)\le \mu_p(n-k-s)$.       
\end{proof}

As an immediate consequence, we have the following.

\begin{corollary} \label{cor: bound for AsMDS prime} Let $p$ be prime.  If $\C$ is a linear $ [n,k,d]_p$ A$^s$MDS code, then
	\begin{equation} \label{eqn: Lee distance and Singleton defect}
		d_L(\C) \le \left\lfloor \mu_p(n-k+1-s) \right\rfloor = \left\lfloor d\cdot \mu_p  \right\rfloor.
	\end{equation}  
\end{corollary}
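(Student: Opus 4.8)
The plan is to derive this immediately from Lemma~\ref{lem:LDimpliesS(C)} by a contrapositive argument. Recall that an A$^s$MDS code is exactly a code of Singleton defect $\operatorname{def}(\C)=s$, so by definition $d=n-k+1-s$. The right-hand side of the claimed inequality rewrites cleanly: since $d=n-k+1-s$, we have $n-k+1-s=d$, so the two expressions $\left\lfloor \mu_p(n-k+1-s)\right\rfloor$ and $\left\lfloor d\cdot\mu_p\right\rfloor$ are literally the same quantity, and the equality in \eqref{eqn: Lee distance and Singleton defect} requires no argument beyond this substitution.

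The substantive content is the first inequality, $d_L(\C)\le\left\lfloor\mu_p(n-k+1-s)\right\rfloor$. First I would argue by contradiction: suppose instead that $d_L(\C)>\left\lfloor\mu_p(n-k+1-s)\right\rfloor$. The aim is to feed this into Lemma~\ref{lem:LDimpliesS(C)}, but there is an index shift to manage. Lemma~\ref{lem:LDimpliesS(C)} is stated with a parameter that sits inside $n-k-s$, whereas here the relevant expression is $n-k+1-s$. So I would apply the lemma with the parameter value $s-1$ in place of its $s$ (assuming $s\ge 1$, which holds since A$^s$MDS presupposes a positive defect): the hypothesis $d_L(\C)>\left\lfloor\mu_p(n-k-(s-1))\right\rfloor=\left\lfloor\mu_p(n-k+1-s)\right\rfloor$ is exactly our assumption, so the lemma yields $\operatorname{def}(\C)\le s-1$. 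This contradicts $\operatorname{def}(\C)=s$, completing the argument.

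The only point requiring care is the boundary case $s=1$ (the AMDS case), where the parameter passed to Lemma~\ref{lem:LDimpliesS(C)} becomes $s-1=0$; I should check whether the lemma as stated permits the value $0$, since it is phrased for $s\in\Z^+$. In fact when the shifted parameter is $0$ the relevant statement is precisely Lemma~\ref{lem:LDimpliesMDS}, which concludes $\operatorname{def}(\C)=0$ from $d_L(\C)>\left\lfloor\mu_p(n-k)\right\rfloor$; so I would invoke Lemma~\ref{lem:LDimpliesMDS} directly in the $s=1$ case and Lemma~\ref{lem:LDimpliesS(C)} (with parameter $s-1\ge 1$) otherwise. I expect this bookkeeping over the two lemmas and the off-by-one shift to be the main, and really the only, obstacle; the rest is a one-line contrapositive.
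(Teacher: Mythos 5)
Your argument is correct and coincides with the paper's own (implicit) proof: the corollary is presented there as an immediate consequence of Lemmas~\ref{lem:LDimpliesMDS} and~\ref{lem:LDimpliesS(C)}, obtained exactly as you do by taking the contrapositive with the parameter shifted to $s-1$, the boundary case $s=1$ being covered by Lemma~\ref{lem:LDimpliesMDS}. Your restriction to $s\ge 1$ is consistent with how the paper uses the corollary, since the defect-zero (MDS) case is treated separately via the stronger bound of Theorem~\ref{thm: main theorem bound}.
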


The following example demonstrates that equality can be attained in (\ref{eqn: Lee distance and Singleton defect}).

\begin{example} \label{example: n=k n=k+1, prime} 
	For any prime $p$ and $k\geq 1$, the code $\C = (\Z_p)^k$ is an $[k,k,1]_p$ MDS code with $d_L(\C) = 1$. If $p \in \{2,3\}$ (when the Hamming and Lee metrics coincide), then $\mu_p = 1$, and equality holds in (\ref{eqn: Lee distance and Singleton defect}).\\

	Similarly, consider the MDS code $\C_1$ over $\Z_p$ with generator matrix 
	\[
	G_1 = [I_k \mid \bar{1}] := \begin{bmatrix}
		1 & 0   & \cdots & 0 & 1 \\
		0 & 1   & \cdots & 0 & 1 \\
		\vdots & \vdots   & \ddots & \vdots & \vdots \\
		0 & 0   & \cdots & 1 & 1
	\end{bmatrix}.
	\]
	In this case, $d_L(\C_1) = 2$, and if $p \in \{2,3\}$, then equality holds in (\ref{eqn: Lee distance and Singleton defect}). Consequently, we obtain \[\Phi(k+1,k,2) = \Phi(k+1,k,3) = 2.\]
\end{example}

From the discussion in Section \ref{sec: MDS Codes and Singleton}, an $[n,k,d]_p$-MDS code with $k\leq p$ satisfies $n\leq p+1$, and if $k>p$, then $n\leq k+1$ (\cite{Ball2012a}, \cite{Bush1952}). Consequently, Lemma \ref{lem:LDimpliesS(C)} leads to the following result.

\begin{corollary} \label{cor: bound for MDS prime} 
	Let $p$ be prime. 	If $n>p+1$ and $k\leq p$, or if $n>k+1$ and $k\geq p$, then  
	\[
	\Phi(n,k,p) \leq \left\lfloor \mu_p(n-k) \right\rfloor.
	\] 
\end{corollary}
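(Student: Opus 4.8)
The plan is to argue by contradiction, using Lemma \ref{lem:LDimpliesMDS} to convert a violation of the claimed bound into the existence of an MDS code with forbidden parameters, which is then ruled out by the length restrictions on MDS codes over prime fields recalled in Section \ref{sec: MDS Codes and Singleton}.

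First I would note that since we work over $\Z_p = \F_p$, the rank of a code coincides with its dimension, so $\Phi(n,k,p)$ is the largest minimum Lee distance attained by a linear $[n,k]_p$ code. To establish the inequality it therefore suffices to show that every linear $[n,k]_p$ code $\C$ satisfies $d_L(\C)\le \left\lfloor \mu_p(n-k)\right\rfloor$; equivalently, that no such code can have $d_L(\C) > \left\lfloor \mu_p(n-k)\right\rfloor$. Suppose toward a contradiction that some linear $[n,k,d]_p$ code $\C$ does satisfy $d_L(\C) > \left\lfloor \mu_p(n-k)\right\rfloor$. Lemma \ref{lem:LDimpliesMDS} immediately forces $\C$ to be MDS, so $d = n-k+1$. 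Now I would split into the two hypothesized regimes. In the case $n>p+1$ with $k\le p$, the length bound for MDS codes over $\F_p$ gives $n\le p+1$, contradicting $n>p+1$. In the case $n>k+1$ with $k\ge p$, the corresponding bound gives $n\le k+1$, contradicting $n>k+1$. Since both cases are impossible, no code exceeding $\left\lfloor \mu_p(n-k)\right\rfloor$ exists, and the stated bound on $\Phi(n,k,p)$ follows.

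The proof is short precisely because the substantive work is carried by the two imported facts: Lemma \ref{lem:LDimpliesMDS}, which translates the Lee-distance hypothesis into MDS-ness, and the classical length bounds (Bush's theorem together with the resolution of the Main Conjecture over prime fields). I do not anticipate a genuine obstacle. The only point demanding a moment's care is the boundary value $k=p$, which lies in both hypothesized regimes: here Bush's theorem yields $m(p,p)=p+1$, so the bound $n\le p+1$ agrees with $n\le k+1$ and the contradiction is obtained consistently under either formulation.
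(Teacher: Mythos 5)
Your proof is correct and follows essentially the same route as the paper: the paper also combines the ``Lee distance too large implies MDS'' lemma (Lemma \ref{lem:LDimpliesMDS}, stated in the paper via Lemma \ref{lem:LDimpliesS(C)}) with the classical length bounds of Ball and Bush on MDS codes over $\F_p$ to rule out the MDS case under the stated hypotheses. Your explicit handling of the boundary case $k=p$ via $m(p,p)=p+1$ is a nice touch, but it is the same argument the paper leaves implicit.
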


The bound in Corollary \ref{cor: bound for MDS prime} can be improved if the codes under consideration are relatively long, in that $n>2p+k$.  

\begin{corollary} \label{cor: floor bound prime case}
	Let $p$ be prime. 	If $k\geq 2$, then
	\[
	\Phi(n,k,p) \leq \mu_p\left(n-k+1- \left\lfloor \frac{n-k+1}{p+1}  \right\rfloor\right).
	\]	
\end{corollary}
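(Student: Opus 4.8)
The plan is to combine the Lee-distance bound for codes of prescribed Singleton defect with the classical bound relating a linear code's length to its defect. Fix any linear $[n,k,d]_p$ code $\C$ of dimension $k\ge 2$ and set $\delta=\operatorname{def}(\C)=n-k+1-d$. Since $\C$ is by definition an A$^{\delta}$MDS code, Corollary \ref{cor: bound for AsMDS prime} gives
\[
d_L(\C) \le \left\lfloor \mu_p\, d\right\rfloor \le \mu_p\,(n-k+1-\delta).
\]
As the right-hand side is decreasing in $\delta$, a lower bound on $\delta$ valid for every rank-$k$ code will translate into a universal upper bound on $d_L(\C)$, and hence on $\Phi(n,k,p)$.

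First I would invoke the length--defect inequality (\ref{eqn: linear code length bounded by defect}) with $q=p$, namely $n \le (\delta+1)(p+1)+k-2$, which holds precisely because $k\ge 2$. Rearranging yields $\delta+1 \ge (n-k+2)/(p+1)$, and since $\delta$ is a nonnegative integer,
\[
\delta \ge \left\lceil \frac{n-k+2}{p+1}\right\rceil - 1.
\]
Next I would convert this ceiling into the floor appearing in the statement. Using the elementary identity $\lceil a/b\rceil = \lfloor (a-1)/b\rfloor + 1$ for positive integers $a,b$ (applied with $a=n-k+2\ge 1$ and $b=p+1$), the displayed lower bound simplifies exactly to $\delta \ge \lfloor (n-k+1)/(p+1)\rfloor$. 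Substituting into the Lee-distance bound gives
\[
d_L(\C) \le \mu_p\left(n-k+1 - \left\lfloor \frac{n-k+1}{p+1}\right\rfloor\right),
\]
and since $\C$ was an arbitrary code of rank $k$, the same bound holds for $\Phi(n,k,p)$.

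The argument is short, so the only real subtlety lies in the floor/ceiling bookkeeping of the final step: one must verify that the integrality rounding of the length constraint lands exactly on $\lfloor (n-k+1)/(p+1)\rfloor$ rather than one unit lower, which is why a careful application of the ceiling--floor identity (or an equivalent case split on the residue of $n-k+1$ modulo $p+1$) is the key technical point. I expect no separate treatment of the small primes $p\in\{2,3\}$ to be necessary here, since Corollary \ref{cor: bound for AsMDS prime} already covers them (with $\mu_p=1$, where the Lee and Hamming metrics coincide), and the length--defect inequality is valid for all $p$.
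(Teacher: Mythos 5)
Your proposal is correct and follows essentially the same route as the paper: both lower-bound the Singleton defect via the length--defect inequality (\ref{eqn: linear code length bounded by defect}) and then apply Corollary \ref{cor: bound for AsMDS prime}. The only difference is presentational—the paper argues by contradiction (assuming $\operatorname{def}(\C)\le a-1$ forces $n\le a(p+1)+k-2$), whereas you rearrange directly and use the ceiling--floor identity, which is a valid and equivalent piece of bookkeeping.
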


\begin{proof}
	Let $\C$ be a linear $[n,k,d]_p$ code with $k\geq 2$. Define $a = \left\lfloor \frac{n-k+1}{p+1}  \right\rfloor$. Then we have $n-k+1\geq a(p+1)$. 
	
	If $\operatorname{def}(\C)\leq a-1$, then equation (\ref{eqn: linear code length bounded by defect}) leads to the contradiction $n\leq a(p+1)+k-2$. Hence, we must have $\operatorname{def}(\C)\geq a$, and the result follows from Corollary \ref{cor: bound for AsMDS prime}.
\end{proof}

%
%

%
%

In the case where $\operatorname{def}(\C)=0$, the validity of the Main Conjecture on MDS Codes over prime fields \cite{Ball2012a} provides the bound $n\leq \max\{k+1,p+1\}$. Consequently, Corollary \ref{cor: floor bound prime case} offers no improvement over Theorem \ref{thm: ByrneWeger} (or Corollary \ref{cor: Chiang and Wolf}) when $\C$ is MDS. However, we present the following refinement in the MDS case.

\begin{theorem} \label{thm: main theorem bound}
	Let $p$ be prime. 	If $\C$ is a linear $ [n,k,d]_p$ code with $k\geq 2$, then either:
	\begin{enumerate}
		\item $\C$ is MDS, in which case 
		\begin{equation}\label{eqn: Ald prime MDS}
			d_L(\C)\leq \left\lfloor \mu_p(n-k+1)\cdot \frac{p-1}{p} \right\rfloor +1,
		\end{equation}
		\item or $\operatorname{def}(\C)=s>0$, in which case 
		\begin{equation} \label{eqn: Ald prime not MDS}
			d_L(\C)\leq \mu_p(n-k+1-s)\leq \mu_p(n-k).
		\end{equation}   
	\end{enumerate}
\end{theorem}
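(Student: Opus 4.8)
The plan is to treat the two cases separately, with the non-MDS case being essentially immediate and the MDS case carrying the real content. If $\operatorname{def}(\C)=s>0$, then $\C$ is an A$^s$MDS code, so Corollary~\ref{cor: bound for AsMDS prime} gives $d_L(\C)\le\lfloor\mu_p(n-k+1-s)\rfloor\le\mu_p(n-k+1-s)$, and since $s\ge 1$ this is at most $\mu_p(n-k)$. This settles \eqref{eqn: Ald prime not MDS}, so I may assume for the remainder that $\C$ is MDS with $d=n-k+1$.

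For the MDS case I would first reduce to dimension $2$. Shortening an MDS code at a coordinate yields an MDS code of length and dimension each reduced by one, so shortening $\C$ at any $k-2$ coordinates produces an $[N,2,N-1]_p$ MDS code $D$ with $N=n-k+2$, whence $N-1=n-k+1$. Every codeword of $D$, extended by zeros on the shortened coordinates, is a codeword of $\C$ of the same Lee weight, so $d_L(\C)\le d_L(D)$. It therefore suffices to bound the minimum Lee weight of a two-dimensional MDS code, and the target becomes $d_L(D)\le\lfloor\mu_p(N-1)\frac{p-1}{p}\rfloor+1$.

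The key step is an averaging argument over a carefully chosen coset rather than over a scalar-multiple orbit. Fix a coordinate, say the first, and consider the set $T=\{c\in D: c_1=1\}$, a coset of the one-dimensional subcode $D_0=\{c\in D: c_1=0\}$; thus $|T|=p$ and every member of $T$ is a nonzero codeword. Since $D_0$ is generated by a minimum-weight codeword whose support is precisely the complement of coordinate $1$, as $c$ ranges over $T$ its value in each coordinate $i\ne 1$ runs over all of $\Z_p$ exactly once. Hence the total Lee weight of $T$ equals $p\cdot w_L(1)+(N-1)\sum_{a\in\Z_p}w_L(a)=p+(N-1)(p-1)\mu_p$, using $\sum_{a\in\Z_p}w_L(a)=(p-1)\mu_p$. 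Averaging over the $p$ codewords of $T$ produces a nonzero codeword of Lee weight at most $1+\frac{p-1}{p}(N-1)\mu_p$, and since Lee weights are integers this minimum is at most $\lfloor\frac{p-1}{p}(N-1)\mu_p\rfloor+1$. Substituting $N-1=n-k+1$ yields \eqref{eqn: Ald prime MDS}.

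The main obstacle is locating the right averaging set: averaging over all nonzero codewords (Wyner--Graham style) or over a single minimum-weight orbit (Chiang--Wolf style) both fall short of the $\frac{p-1}{p}$ saving, as one can already check on $[5,2,4]_5$, where the bound is tight at $5$. The two essential ideas are (i) reducing to dimension $2$ first, so that the effective length contributing to the average is $N-1=n-k+1$ rather than $n-1$, and (ii) fixing one coordinate to the value $1$, which forces the remaining coordinates to sweep out all of $\Z_p$ (yielding the factor $\frac{p-1}{p}$ from the coordinate values that vanish) while contributing a clean $w_L(1)=1$ per codeword (yielding the additive $+1$). Verifying that each non-pivot coordinate is surjective on $T$—equivalently, that the generator of $D_0$ has full support off coordinate $1$—is exactly the point at which the MDS hypothesis is used.
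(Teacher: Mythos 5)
Your proof is correct and is essentially the paper's own argument in different packaging: the paper's $p\times n$ array of codewords with first $k-1$ entries $(0,\ldots,0,1)$ is exactly your coset $T$ extended by zeros on the shortened coordinates, and both proofs exploit the same fact that the MDS property forces each of the last $d=n-k+1$ columns to sweep out all of $\Z_p$, then average. Your explicit shortening-to-dimension-$2$ step and the observation that the argument runs uniformly in $p$ (the paper treats $p=2$ separately) are minor presentational differences, not a different route.
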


\begin{proof}
	If $\C$ is not MDS, then the result follows from Corollary \ref{cor: bound for AsMDS prime}.  If $\C$ is MDS, then consider the $p\times n$ array $S$, the rows of which comprise the codewords with the first $k-1$ entries equal to $00\cdots 01$. It follows that no two rows of $S$ have a common entry in the final $d=n-k+1$ columns. If $p$ is odd, then summing the Lee weights column-wise over the final $d$ coordinates gives $d\cdot \frac{p^2-1}{4}$, whence averaging shows some row, $x$, satisfies 
	\begin{equation}
		w_L(x) \le \left\lfloor\frac{p^2-1}{4}\cdot d\cdot \frac1p \right\rfloor +1 = \left\lfloor \mu_p(n-k+1) \cdot \frac{p-1}{p} \right\rfloor +1. 
	\end{equation}    	
	If $p=2$ then \textit{mutatis mutandis},  $S$ provides a row, $x$, satisfying
	\begin{equation}
		w_L(x) \le \left\lfloor  \frac{d}{2} \right\rfloor +1 \left(= \left\lfloor \mu_p(n-k+1) \cdot \frac{p-1}{p} \right\rfloor +1\right). 
	\end{equation}    
\end{proof}

\begin{rem}\label{rem: compare with Chiang and Wolf}
	In the present context of codes over $\Z_p$, the bounds of Chiang and Wolf (Corollary \ref{cor: Chiang and Wolf}), Alderson and Huntemann (Corollary \ref{cor: ChiangWolfgeneral}), and Byrne and Weger (Theorem \ref{thm: ByrneWeger}) all coincide. To compare the bound (\ref{eqn: Ald prime MDS}) in Theorem \ref{thm: main theorem bound} with these bounds, it is helpful to observe that   
	\begin{equation}
		\left\lfloor \mu_p(n-k+1)\cdot \frac{p-1}{p}\right\rfloor +1 = \left\lfloor \mu_p(n-k+1)-\frac{\mu_p}{p}(n-k+1)\right\rfloor +1. 
	\end{equation}
	Simple calculations show that:
	\begin{itemize}
		\item If $\mu_p(n-k+1)$ is an integer, then the bound (\ref{eqn: Ald prime MDS}) subsumes the other bounds and offers a strict improvement when $n\geq k+3$.
		\item If $\mu_p(n-k+1)$ is not an integer, then the bound (\ref{eqn: Ald prime MDS}) still subsumes the other bounds and provides a strict improvement when $n\geq k+5$.
	\end{itemize}
\end{rem}

It is straightforward to verify that if the bound (\ref{eqn: Ald prime not MDS}) exceeds that in (\ref{eqn: Ald prime MDS}), then $\C$ is not MDS. Thus, taking into account Corollaries \ref{cor: bound for MDS prime} and \ref{cor: floor bound prime case}, along with Theorem \ref{thm: main theorem bound}, we obtain the optimal application the new bounds in Table \ref{tab:optimal_bounds}. These observations are also reflected in the numerical comparisons presented in Figures \ref{fig:Prime_n_3_7}-\ref{fig:Prime_n_10_5} in Section \ref{sec: comparisons}.


\begin{table}[h]
	\centering
	\renewcommand{\arraystretch}{1.5}
	\begin{tabular}{|c|c|}
		\hline
		\textbf{Bound}:  $\Phi(n,k,p)\le $ & \textbf{Optimal Conditions} \\
		\hline
		$\displaystyle \left\lfloor \mu_p(n-k+1)\cdot \frac{p-1}{p}\right\rfloor +1$ & $k+1\leq n\leq p+1$ \\[8pt]
		\hline
		$\displaystyle \mu_p(n-k)$ & $n>p+1$ and $k\leq p$, or $n>k+1$ and $k\geq p$ \\[4pt]
		\hline
		$\displaystyle \mu_p\left(n-k+1- \left\lfloor \frac{n-k+1}{p+1}  \right\rfloor\right)$ & $n>2p+k+1$ \\[8pt]
		\hline
	\end{tabular}
	\caption{Optimal application of the presented bounds for $\Phi(n,k,p)$ when $k\geq 2$ and $p$ is prime.}
	\label{tab:optimal_bounds}
\end{table}	
%

\begin{example}
	Consider the $(5,2,4)_5$-MDS code $\C$ with generator matrix
	\[
	G = \begin{bmatrix}
		0	& 1 & 2 & 2 & 1  \\
		2	& 1 & 4 & 1 & 4 
	\end{bmatrix}.
	\] 
	This code satisfies $d_L(\C) = 5$. Although $\C$ does not meet the bound in Theorem \ref{thm: ByrneWeger}, Theorem \ref{thm: main theorem bound} confirms that $\C$ is in fact an MLDR code.
	
	Furthermore, the parameters of this code satisfy $\lfloor\mu_p(n-k)\rfloor = 4$, demonstrating that the conditions required in Corollary \ref{cor: bound for MDS prime} are necessary.
\end{example}

We next explore some elementary properties of $\Phi(n,k,p)$. The following lemma will be useful.

\begin{lemma} \label{lem: interval min}
	Let $k\ge 2$ be an integer, $m>0$. If $A\subset (0,m]$ with $|A|=k$, and $B= A\cup \{|a_i-a_j| \mid a_i,a_j\in A\}$, then there exists $x\in B$ with $0<x\le \frac{m}{k}$.    	
\end{lemma}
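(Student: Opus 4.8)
The plan is to run a telescoping-sum and averaging argument on the sorted elements of $A$ together with the consecutive gaps between them, all of which lie in $B$ by construction.

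First I would use that $|A|=k$ and $A\subset(0,m]$ to relabel the elements of $A$ in increasing order as $0<a_1<a_2<\cdots<a_k\le m$, so that they are $k$ distinct positive reals. Next I would form the list of $k$ positive quantities
\[
a_1,\quad a_2-a_1,\quad a_3-a_2,\quad \ldots,\quad a_k-a_{k-1},
\]
and observe that each one lies in $B$: the first is $a_1\in A\subseteq B$, and each $a_{i+1}-a_i=|a_{i+1}-a_i|$ is an absolute difference of two elements of $A$, hence a member of $B$ by definition.

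The key observation is that these $k$ values sum telescopically to $a_1+\sum_{i=1}^{k-1}(a_{i+1}-a_i)=a_k\le m$. Since they are $k$ strictly positive numbers whose total is at most $m$, their minimum is strictly positive and at most $m/k$. Taking $x$ to be this minimum element then yields $x\in B$ with $0<x\le \frac{m}{k}$, as required.

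The argument is elementary and I do not anticipate a genuine obstacle; the only point that requires care is the bookkeeping that makes the number of summands come out to exactly $k$ rather than $k-1$. One must include the smallest element $a_1$ itself—effectively the gap from the lower endpoint of the interval—alongside the $k-1$ consecutive differences $a_{i+1}-a_i$. Omitting $a_1$ and using only the consecutive gaps would telescope to $a_k-a_1<m$ over just $k-1$ terms, yielding the strictly weaker bound $m/(k-1)$; it is precisely the inclusion of $a_1$ that secures the sharp factor $1/k$.
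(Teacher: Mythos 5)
Your proof is correct, but it follows a genuinely different route from the paper's. The paper argues by contradiction with the pigeonhole principle: assuming $B \cap \left(0, \frac{m}{k}\right] = \emptyset$, it partitions $(0,m]$ into the $k$ equal subintervals $\left( \frac{mi}{k}, \frac{m(i+1)}{k} \right]$, $0 \le i \le k-1$, and concludes that some subinterval must contain two elements of $A$, whose difference then lies in $\left(0,\frac{m}{k}\right]$, a contradiction. (Strictly speaking, that collision requires first observing that the lowest subinterval contains no element of $A$ at all, since $A \subseteq B$; only then are $k$ elements of $A$ forced into $k-1$ cells.) Your argument is instead direct and constructive: you exhibit $k$ explicit positive members of $B$ --- the least element $a_1$ together with the $k-1$ consecutive gaps $a_{i+1}-a_i$ --- whose telescoping sum is $a_k \le m$, so the smallest of them is at most $\frac{m}{k}$. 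The two arguments are dual in the familiar partition-versus-averaging sense and yield the identical bound, but yours has two mild advantages: it identifies concrete elements of $B$ among which a small one must occur rather than merely asserting existence, and it sidesteps the pigeonhole bookkeeping about the empty first cell that the paper's write-up glosses over. Your closing remark about why $a_1$ must be counted as a summand --- omitting it leaves only $k-1$ terms and degrades the bound to $\frac{m}{k-1}$ --- is exactly the right point of care, and it mirrors the role played by the first subinterval in the paper's partition.
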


\begin{proof}
	Assume for contradiction that $B \cap (0, \frac{m}{k}] = \emptyset$. The interval $(0, m]$ is partitioned into $k$ subintervals of the form  $\left( \frac{m \cdot i}{k}, \frac{m \cdot (i+1)}{k} \right], \quad 0 \leq i \leq k-1.$ 	By the pigeonhole principle, at least one of these subintervals must contain at least two elements of $A$, contradicting our assumption that $B \cap (0, \frac{m}{k}] = \emptyset$.
\end{proof}

\begin{lemma}\label{lem: properties of Phi}
	Some elementary properties of $\Phi(n,k,p)$, $p$ prime.
	
	\begin{enumerate}
		\item \label{pt: n=k} $\Phi(k,k,p) =1$
		\item \label{pt: n+1 vs n} $\Phi(n,k,p)\le \Phi(n+1,k,p) $
		\item \label{pt: k+1 vs k} $\Phi(n,k,p)\ge \Phi(n,k+1,p) $
		\item  \label{pt: phi bush 2} If $p\in \{2,3\}$, or if $k\ge \frac{p-1}{2}$ then $\Phi(k+1,k,p) =2$. 
		\item  \label{pt: phi bush 3} If $p\ge 5$ and $k< \frac{p-1}{2}$ then $\Phi(k+1,k,p) \le 2+\frac{p-3}{2k} $.  
		\item  \label{pt: Wood 1-d codes} $\Phi(n,1,p) \le n\mu_p$ with equality only if $n=\alpha\cdot \frac{p-1}{2}$ for some integer $\alpha>0$.
	\end{enumerate}
\end{lemma}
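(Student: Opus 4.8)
The plan is to handle the six parts in order of increasing difficulty, exploiting throughout that over the field $\Z_p$ every $[n,k]_p$ code is equivalent, by a coordinate permutation alone (which is a Lee isometry) followed by row reduction, to one with a systematic generator $[I_k \mid V]$, and that Lee weight is also invariant under negating a generator row. Parts \eqref{pt: n=k}--\eqref{pt: k+1 vs k} are then immediate. For \eqref{pt: n=k}, a rank-$k$ code in $\Z_p^n$ with $n=k$ is all of $\Z_p^k$, whose least nonzero Lee weight is $1$. For \eqref{pt: n+1 vs n} I append a zero coordinate to an optimal length-$n$ code to get a length-$(n+1)$ code of the same rank and the same minimum Lee distance. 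For \eqref{pt: k+1 vs k} I pass to a rank-$k$ submodule of an optimal rank-$(k+1)$ code, whose minimum Lee distance is at least that of the ambient code.

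For \eqref{pt: phi bush 2} and \eqref{pt: phi bush 3} I first reduce to MDS codes: a non-MDS code of length $k+1$ and rank $k$ has $d_H=1$, hence contains a word $(0,\dots,c,\dots,0)$, and scaling by $c^{-1}$ yields Lee weight $1$, so such codes are harmless for an upper bound. An MDS code is equivalent to one generated by $[I_k\mid v]$ with every $v_i\neq 0$. For \eqref{pt: phi bush 2}, observe that $\Z_p^{*}$ splits into exactly $\tfrac{p-1}{2}$ sign-classes $\{\pm b\}$; if $k\ge\tfrac{p-1}{2}$ then, after discarding the class $\{\pm 1\}$, the pigeonhole principle forces either some $v_i\in\{1,-1\}$ (so the row $e_i$ has Lee weight $2$) or some $v_i=\pm v_j$ (so $e_i\mp e_j$ has last coordinate $0$ and Lee weight $2$). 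Thus $d_L\le 2$, and since the code $[I_k\mid\bar 1]$ of Example~\ref{example: n=k n=k+1, prime} attains $d_L=2$, we get $\Phi(k+1,k,p)=2$; the cases $p\in\{2,3\}$ are trivial, the Lee and Hamming metrics coinciding there.

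Part \eqref{pt: phi bush 3} is where Lemma~\ref{lem: interval min} does the work. With $p\ge 5$ and $k<\tfrac{p-1}{2}$, negate rows so that each $v_i$ equals its own Lee weight, i.e.\ $v_i\in\{1,\dots,\tfrac{p-1}{2}\}$; then the row $e_i$ has Lee weight $1+v_i$ and the difference $e_i-e_j$ has Lee weight $2+\lvert v_i-v_j\rvert$. The case $k=1$ is direct, as $e_1$ already has Lee weight $1+v_1\le 1+\tfrac{p-1}{2}=2+\tfrac{p-3}{2}$. For $k\ge 2$: if some $v_i=1$ or some $v_i=v_j$ we are done with a codeword of Lee weight $2$; otherwise all $v_i\ge 2$ are distinct, so $A=\{v_i-1\}$ is a $k$-element subset of $\left(0,\tfrac{p-3}{2}\right]$, and Lemma~\ref{lem: interval min} (with $m=\tfrac{p-3}{2}$) produces an element of $A\cup\{\lvert v_i-v_j\rvert\}$ in $\left(0,\tfrac{p-3}{2k}\right]$. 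If it is some $v_i-1$ then $e_i$ has Lee weight $1+v_i\le 2+\tfrac{p-3}{2k}$; if it is some $\lvert v_i-v_j\rvert$ then $e_i-e_j$ has Lee weight $2+\tfrac{p-3}{2k}$. Shifting $A$ to $\{v_i-1\}$ instead of $\{v_i\}$ is precisely the device that sharpens the naive $\tfrac{p-1}{2k}$ to the claimed $\tfrac{p-3}{2k}$.

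For \eqref{pt: Wood 1-d codes}, averaging gives the bound: writing $g$ for a generator and $f(a)=\sum_i w_L(ag_i)$, each nonzero coordinate contributes $\sum_{b\neq 0}w_L(b)=(p-1)\mu_p$ as $a$ runs over $\Z_p^{*}$, so $\sum_{a\neq 0}f(a)=w_H(g)(p-1)\mu_p$ and hence $d_L(\C)=\min_{a\neq 0}f(a)\le w_H(g)\mu_p\le n\mu_p$. Equality forces $w_H(g)=n$ and $\min=\mathrm{average}$, i.e.\ $f$ constant on $\Z_p^{*}$. The substance of the ``only if'' is that this constancy forces the multiset $\{w_L(g_i)\}$ to contain each value $1,\dots,\tfrac{p-1}{2}$ equally often, which yields $\tfrac{p-1}{2}\mid n$. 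Grouping coordinates by sign-class recasts ``$f$ constant'' as the statement that a convolution on $\bar G=\Z_p^{*}/\{\pm 1\}\cong\Z_{(p-1)/2}$ is constant, and Fourier analysis on $\bar G$ pins the class-multiplicities to a common value exactly when every nontrivial character sum $\sum_{x=1}^{(p-1)/2}x\,\chi(x)$ is nonzero. \emph{This nonvanishing is the main obstacle}: these sums equal $\tfrac12\sum_{b\neq 0}w_L(b)\chi(b)$, a nonzero multiple of a Gauss sum / Dirichlet $L$-value for each nontrivial even $\chi$, so I would either invoke the classical nonvanishing or derive it from the additive Fourier expansion of $w_L$ together with $\lvert\tau(\chi)\rvert=\sqrt p$. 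Once class-balance is established, integrality of the common multiplicity gives $n=\alpha\cdot\tfrac{p-1}{2}$.
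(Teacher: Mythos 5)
Your treatment of parts \ref{pt: n=k}--\ref{pt: phi bush 3} is correct and follows essentially the paper's own route: the same constructions for monotonicity, the same reduction of length-$(k+1)$ codes to MDS codes in systematic form, the same sign-class pigeonhole for part \ref{pt: phi bush 2}, and the same application of Lemma \ref{lem: interval min} for part \ref{pt: phi bush 3}. In fact your write-up of part \ref{pt: phi bush 3} is slightly more careful than the paper's: you make explicit the shift to $A=\{v_i-1\}\subset(0,\frac{p-3}{2}]$ that is needed to extract the constant $\frac{p-3}{2k}$ (rather than $\frac{p-1}{2k}$) from Lemma \ref{lem: interval min}, and you treat $k=1$ separately, which is necessary because the lemma requires $k\ge 2$; the paper glosses over both points.

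Part \ref{pt: Wood 1-d codes} is where you have a genuine gap. Your averaging argument for the inequality is fine and matches the paper, and your reduction of the equality case has the right structure: equality forces a full-support generator and constancy of $a\mapsto\sum_i w_L(ag_i)$ on $\Z_p^{*}$, and the divisibility $\frac{p-1}{2}\mid n$ would follow once the sign-class multiplicities are shown to be equal. But you reduce that last step to the nonvanishing of $\sum_{x=1}^{(p-1)/2}x\,\chi(x)$ for every nontrivial even character $\chi$ modulo $p$, and you do not prove it; you only say you would ``invoke the classical nonvanishing or derive it from the additive Fourier expansion of $w_L$ together with $|\tau(\chi)|=\sqrt p$.'' Neither suggestion closes the gap as stated: this nonvanishing is not a standard citable fact in that form, and the Gauss-sum modulus alone is not enough --- expanding $w_L$ additively turns the sum into $\tau(\chi)\sum_{j\ne 0}c_j\bar\chi(j)$ where the coefficients $c_j$ are all negative, but since the values $\bar\chi(j)$ are complex units, a same-sign condition on the $c_j$ does not preclude cancellation (indeed, constant $c_j$ would give exactly zero). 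What does work is to recognize $\sum_{j\ne 0}c_j\bar\chi(j)$ as a cosecant-squared character sum and evaluate it, via the partial-fraction expansion of $\csc^2$, as a nonzero multiple of $L(2,\psi)$ for a suitable even character $\psi$ modulo $2p$, which is nonzero because $s=2$ lies in the region of absolute convergence of the Euler product; that is a substantive argument your sketch does not contain. The paper avoids all of this by quoting Wood's structure theorem \cite{Wood2002}: every constant Lee-weight linear code over $\Z_p$ is an $m$-fold replication of $\vspan{(1,2,\ldots,\frac{p-1}{2})}$ up to coordinate signs, which immediately forces $n=m\cdot\frac{p-1}{2}$. So either cite Wood as the paper does, or supply the $L(2,\psi)\neq 0$ argument in full; as written, part \ref{pt: Wood 1-d codes} is not proven.
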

\begin{proof}
	\ref{pt: n=k}: See Example \ref{example: n=k n=k+1, prime}. \\
	\ref{pt: n+1 vs n}: If $\C$ achieves $\Phi(n,k,p)$, then repeating any coordinate results in code $\C'$ of length $n+1$, and $d_L(\C')\ge d_L(\C)$.\\
	\ref{pt: k+1 vs k}: Suppose $\C$ achieves $\Phi(n,k+1,p)$ and let $v$ be a codeword of minimum Lee weight in $\C$. Any linearly independent $k$-subset $S'$ with $\{v\} \subseteq S'\subseteq \C $ spans a $k$-dimensional code $\C'$ with $w_L(\C')\ge w_L(v) $.\\  
	\ref{pt: phi bush 2}: For $p\in \{2,3\}$ see Example \ref{example: n=k n=k+1, prime}. Assume  $k\ge \frac{p-1}{2}$. If $\C$ is not MDS, then $d_H(\C)=1=d_L(\C)$, so assume $\C$ is MDS. The classical result of Bush \cite{Bush1952} gives that $\C$ is Lee-equivalent (up to positional permutations, and multiplication of a particular coordinate position by $\pm 1$) to a code with generator matrix $G = [I_k\mid\bar{v}]$, where  $w_H(\bar{v}) = k$. If $k=\frac{p-1}{2}$, then either some coordinate of $\bar{v}$ has Lee weight $1$ (and $d_L(\C)=2$), or at least two coordinates of $\bar{v}$ have the same Lee weight. If $k>\frac{p-1}{2}$ then it must be the case that two coordinates of $\bar{v}$ have the same Lee weight. The difference or sum of the corresponding rows give  $d_L(\C) \le 2$, and observing that $d_L(\C) \ge d_H(\C)=2$ gives the result.\\
	\ref{pt: phi bush 3}: Arguing as in part \ref{pt: phi bush 2}, we may assume $\C$ is MDS, and by multiplying rows by $-1$ as appropriate, we may assume $\C$ has a generator matrix $G' = [A\mid \bar{v}]$, where $A$ is a diagonal matrix with each diagonal entry being $\pm 1$, $w_H(\bar{v}) = k$, and the entries $v_i$ of $\bar{v}$ are mutually distinct, satisfying $2\le v_i\le \frac{p-1}{2}$ (else $d_L(\C) =2$). From the Lemma \ref{lem: interval min}, among the row of $G'$, or the pairwise differences of rows of $G'$ there exist a codeword of Lee weight at most $2+\frac{p-3}{2k}$. \\ 
	\ref{pt: Wood 1-d codes}: Let $\C$ be an $[n,1,d]_p$ code. An averaging argument shows $d_L(\C)\le n\mu_p$, with equality if and only if all nonzero codewords have constant Hamming weight $n$, and constant Lee weight $n\mu_p$. In  \cite{Wood2002}, Wood showed that a shortest length constant Lee-weight linear code over $\Z_p$ or $\Z_{2^t}$ provides all constant Lee-weight codes in that all constant Lee-weight codes are $m$-fold replications of the shortest such code (up to Lee equivalence). In particular, he showed that any constant Lee weight code over $\Z_p$ is an $m$-fold replication of $\vspan{(1,2,\ldots,\frac{p-1}{2})}$ up to multiplication of each coordinate by $\pm 1$, (and  any constant Lee weight code over $\Z_{2^t}$ is an $m$-fold replication of $\vspan{(1,2,\ldots,2^t-1)}$ up to multiplication of each coordinate by $\pm 1$). The result follows.     
\end{proof}

\begin{lemma}\label{lem: short code cannot meet mu bound}
	Let $p$ be prime. 	If $\C$ is an $[n,k,d]_p$ code with $p > 2$ and $d_L(\C) = d \cdot \mu_p$, then $\frac{p-1}{2}$ divides $d$.\\ 
	In particular, if $p \equiv 3 \mod 4$ or if $d$ is even, and $\frac{p-1}{2}$ does not divide $d$, then 
	\[
	d_L(\C) \leq d \cdot \mu_p - 1.
	\]
\end{lemma}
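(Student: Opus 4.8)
The plan is to extract a single minimum-Hamming-weight codeword and analyze the one-dimensional subcode it generates, reusing the column-averaging device from the proof of Lemma \ref{lem:LDimpliesMDS}. First I would fix a codeword $x \in \C$ with $w_H(x) = d$ and form the $(p-1) \times n$ array whose rows are the nonzero scalar multiples $\lambda x$, $\lambda \in \Z_p^*$. On each of the $d$ support positions of $x$, the map $\lambda \mapsto \lambda x_i$ is a bijection of $\Z_p^*$ onto itself, so that column contains each nonzero element of $\Z_p$ exactly once and carries total Lee weight $\sum_{a=1}^{p-1} w_L(a) = \frac{p^2-1}{4}$, while the non-support columns contribute nothing. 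Summing over all columns gives total Lee weight $d \cdot \frac{p^2-1}{4}$, and dividing by the $p-1$ rows shows the average nonzero Lee weight in $\langle x\rangle$ is exactly $\frac{d(p+1)}{4} = d\mu_p$. In particular $d_L(\C) \le d\mu_p$.

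Next I would exploit the equality hypothesis. Since every $\lambda x$ is a nonzero codeword, each has Lee weight at least $d_L(\C) = d\mu_p$; but their average is $d\mu_p$, so all $p-1$ of them must have Lee weight exactly $d\mu_p$. Restricting to the $d$ support positions of $x$ therefore produces a length-$d$ linear code over $\Z_p$ with no zero coordinates in which every nonzero codeword has the same Lee weight, that is, a constant Lee-weight code. Here I would invoke Wood's classification, as recorded in part \ref{pt: Wood 1-d codes} of Lemma \ref{lem: properties of Phi}: every constant Lee-weight code over $\Z_p$ is, up to coordinate sign changes, an $m$-fold replication of $\langle (1, 2, \ldots, \frac{p-1}{2})\rangle$. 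Since replication and sign changes preserve length, the length $d$ of our code equals $m \cdot \frac{p-1}{2}$, giving $\frac{p-1}{2} \mid d$, which is the first assertion.

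For the ``in particular'' clause I would observe that the stated hypothesis is precisely an integrality condition: writing $\mu_p = \frac{p+1}{4}$, the quantity $d\mu_p = \frac{d(p+1)}{4}$ is an integer if and only if $4 \mid d(p+1)$, and a short check modulo $4$ shows this occurs exactly when $p \equiv 3 \pmod 4$ (so that $4 \mid p+1$) or $d$ is even. Under this condition, if $\frac{p-1}{2} \nmid d$ then the contrapositive of the first part rules out $d_L(\C) = d\mu_p$; since $d_L(\C)$ is a nonnegative integer bounded above by the integer $d\mu_p$, this forces $d_L(\C) \le d\mu_p - 1$.

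The main obstacle is the equality-forcing step together with the correct application of Wood's theorem: one must argue that equality in the averaging bound upgrades ``minimum $\ge d\mu_p$'' to ``every generator multiple has Lee weight exactly $d\mu_p$,'' and then that the support-restricted code is genuinely a constant Lee-weight code with no zero columns, so that Wood's length divisibility applies cleanly and yields $d$ as a multiple of $\frac{p-1}{2}$. Everything else, namely the column total $\frac{p^2-1}{4}$ and the modulo-$4$ identification of the integrality condition, is routine.
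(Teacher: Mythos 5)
Your proposal is correct and follows essentially the same route as the paper's own (much terser) proof: an averaging argument over the scalar multiples of a minimum-Hamming-weight codeword, the observation that equality forces those multiples to form a constant Lee-weight code on the support, and then Wood's classification (as invoked in part \ref{pt: Wood 1-d codes} of Lemma \ref{lem: properties of Phi}) to extract the divisibility $\frac{p-1}{2} \mid d$, with the ``in particular'' clause following from the integrality of $d\mu_p$ exactly as you argue. Your write-up is in fact more careful than the paper's on two points the paper leaves implicit: the equality-forcing step and the verification that the support-restricted code has no zero columns, so Wood's length divisibility applies cleanly.
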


\begin{proof}
	Let $\C$ be an $[n,k,d]_p$ code. An averaging argument gives $w_L(\C) \leq d\mu_p$, with equality if and only if all minimum Hamming weight codewords have constant Lee weight $d\mu_p$. Using Wood's result as in part \ref{pt: Wood 1-d codes} of Lemma \ref{lem: properties of Phi} completes the proof.
\end{proof}

\begin{corollary}
	Let $p$ be prime. 	If $n \leq k + \frac{p-5}{2}$, then 
	\[
	\Phi(n,k,p) < \mu_p(n-k+1).
	\]
\end{corollary}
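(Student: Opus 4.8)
The plan is to bound $d_L(\C)$ for an arbitrary linear $[n,k,d]_p$ code $\C$ of the stated length and to show that the strict inequality holds uniformly over all such codes, so that it passes to the maximum defining $\Phi(n,k,p)$. First I would dispose of small $p$: for $p\in\{2,3\}$ the hypothesis $n\le k+\frac{p-5}{2}$ forces $n<k$, which is impossible, so the statement is vacuous and I may assume $p\ge 5$. The natural organizing principle is to split according to the Singleton defect $s=\operatorname{def}(\C)=n-k+1-d$, i.e. according to whether $\C$ is MDS.

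For the non-MDS case ($s\ge 1$, equivalently $d\le n-k$), I would invoke the averaging bound packaged in Corollary \ref{cor: bound for AsMDS prime}, which gives $d_L(\C)\le\lfloor d\,\mu_p\rfloor\le d\,\mu_p\le(n-k)\mu_p$. Since $\mu_p>0$, this is strictly less than $\mu_p(n-k+1)$, so non-MDS codes present no obstruction.

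The crux is the MDS case $d=n-k+1$, where the same averaging argument yields only $d_L(\C)\le d\,\mu_p=\mu_p(n-k+1)$, with equality a priori possible. Here I would exploit the hypothesis directly: $n\le k+\frac{p-5}{2}$ gives $d=n-k+1\le\frac{p-3}{2}$, so that $1\le d<\frac{p-1}{2}$ and therefore $\frac{p-1}{2}$ cannot divide $d$. Lemma \ref{lem: short code cannot meet mu bound} asserts that equality $d_L(\C)=d\,\mu_p$ forces $\frac{p-1}{2}\mid d$; its contrapositive then delivers the strict inequality $d_L(\C)<\mu_p(n-k+1)$ for every MDS code in this range. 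I would note that the requirement $p>2$ in Lemma \ref{lem: short code cannot meet mu bound} is automatically met, the borderline case $p=5$ forcing $d=1$ (so $n=k$), and $\tfrac{p-1}{2}=2\nmid 1$.

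Combining the two cases, every rank-$k$ code of length $n\le k+\frac{p-5}{2}$ satisfies $d_L(\C)<\mu_p(n-k+1)$. Since $\Phi(n,k,p)$ is a maximum of Lee distances over a finite (nonempty, bounded) set of nonnegative integers, the strict bound is inherited, yielding $\Phi(n,k,p)<\mu_p(n-k+1)$. I expect no genuine computational difficulty; the entire weight of the argument rests on the divisibility obstruction of Lemma \ref{lem: short code cannot meet mu bound}, which converts the soft averaging inequality into a strict one precisely in the MDS regime where the hypothesis on $n$ keeps $d$ below $\frac{p-1}{2}$.
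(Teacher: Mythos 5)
Your proposal is correct and follows essentially the same route as the paper's proof: both dispose of $p\in\{2,3\}$ via $n\ge k$, handle the non-MDS case with Corollary \ref{cor: bound for AsMDS prime}, and in the MDS case use the hypothesis to force $d\le\frac{p-3}{2}$ so that the divisibility obstruction of Lemma \ref{lem: short code cannot meet mu bound} rules out equality $d_L(\C)=d\mu_p$. Your additional remarks (vacuousness for small $p$, the borderline $p=5$ case, and passing the strict inequality to the maximum defining $\Phi$) simply make explicit what the paper leaves implicit.
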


\begin{proof}
	Since $n \geq k$, we necessarily have $p \geq 5$. Let $\C$ be an $[n,k,d]_p$ code. If $d \leq n-k$, then the result follows from Corollary \ref{cor: bound for AsMDS prime}. If $d = n-k+1$, then the assumption provides $d \leq \frac{p-3}{2}$, so the result follows from Lemma \ref{lem: short code cannot meet mu bound}.
\end{proof}

Having explored new bounds for codes over $\Z_p$, we now turn our attention to codes over $\Z_q$, where $q = p^t$.

\subsection{Bounds for Linear Codes Over $\Z_q$, $q=p^t$}

The following lemma enables bounds established for codes over $\Z_p$ to be extended to bounds on codes over  $\Z_q$, $q=p^t$.  

\begin{lemma}\label{lem: Phi p^t} 
	Let $p$ be prime. 	If $t \geq 1$, then
	\begin{equation} \label{eqn: Phi p^t}
		\Phi(n,K,p^t) \leq p^{t-1} \cdot \Phi(n,K,p).
	\end{equation}
\end{lemma}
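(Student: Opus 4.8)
The plan is to reduce the problem over $\Z_{p^t}$ to the corresponding problem over $\Z_p$ by passing to the socle. Given an arbitrary linear code $\C \subseteq \left(\Z_{p^t}\right)^n$ of rank $K$, I would work with its socle $S(\C) = \vspan{p^{t-1}} \cap \C$. Every codeword of $S(\C)$ has all of its entries in the submodule $\vspan{p^{t-1}} = \{0,\, p^{t-1},\, 2p^{t-1},\, \ldots,\, (p-1)p^{t-1}\}$, which is cyclic of order $p$; hence $S(\C)$ is naturally a $\Z_p$-vector space. The first key input is the Horimoto--Shiromoto result quoted in Section \ref{sec: Prelim}: a code and its socle share the same length and rank. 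Thus $S(\C)$ has length $n$ and $\Z_p$-dimension exactly $K$.

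Next I would introduce the scaling map $\phi\colon \left(\Z_p\right)^n \to \vspan{p^{t-1}}^{\,n}$ given by $\phi(v) = p^{t-1} v$, a $\Z_p$-module isomorphism onto the set of vectors with all coordinates in $\vspan{p^{t-1}}$. The crucial Lee-weight identity is that, for $a \in \{0,1,\ldots,p-1\}$,
\[
w_L\!\left(p^{t-1}a\right) = \min\{p^{t-1}a,\; p^{t-1}(p-a)\} = p^{t-1}\min\{a,\, p-a\},
\]
so that $w_L(\phi(v)) = p^{t-1}\, w_L(v)$ for every $v \in \left(\Z_p\right)^n$, where the right-hand weight is computed in $\Z_p$. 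Setting $\C' = \phi^{-1}(S(\C))$ then yields a $\Z_p$-linear code of length $n$ and dimension $K$, and the identity gives $d_L(S(\C)) = p^{t-1}\, d_L(\C')$.

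Finally, since $S(\C) \subseteq \C$, the minimum nonzero Lee weight taken over $S(\C)$ is at least that taken over $\C$, i.e. $d_L(\C) \le d_L(S(\C)) = p^{t-1}\, d_L(\C')$. Because $\C'$ is a legitimate rank-$K$ competitor over $\Z_p$, we have $d_L(\C') \le \Phi(n,K,p)$, and chaining the inequalities gives $d_L(\C) \le p^{t-1}\,\Phi(n,K,p)$; taking the maximum over all rank-$K$ codes $\C$ then produces the claimed bound. I expect the only delicate point to be confirming that $\C'$ has rank exactly $K$, so that it is admissible in the definition of $\Phi(n,K,p)$ --- this is precisely where the rank-preservation property of the socle is needed. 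The Lee-weight scaling identity and the containment $S(\C)\subseteq\C$ are then routine.
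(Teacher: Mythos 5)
Your proposal is correct and follows essentially the same route as the paper's own proof: pass to the socle $S(\C)$, invoke the Horimoto--Shiromoto fact that the socle preserves length and rank, observe that $S(\C)$ is (up to the scaling by $p^{t-1}$) a rank-$K$ code over $\Z_p$ whose Lee weights are exactly $p^{t-1}$ times the corresponding $\Z_p$-weights, and conclude $d_L(\C)\le d_L(S(\C))\le p^{t-1}\Phi(n,K,p)$. The only difference is one of exposition: you make the scaling isomorphism $\phi$ and the weight identity explicit, whereas the paper states these steps implicitly in a single line.
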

\begin{proof}
	Let $\mathcal{C} \subseteq\left(\Z_{p^t}\right)^n$ be an $[n,k,d]_q$ code of rank $K$, and let $S(\C)$ denote the socle of $\C$, so $S(\C)$ is an $[n,K,d]_p$-code over the alphabet $ \vspan{p^{t-1}}$. It follows that \[d_L(\C)\le d_L(S(\C))\le p^{t-1}\Phi(n,K,p).\]
\end{proof}

The following examples show that equality in (\ref{eqn: Phi p^t}) is attainable.

\begin{example}\label{example: n=k, prime power}
	For fixed $K \geq 1$, consider the code $\C$ over $\Z_{p^t}$ with generator matrix $p^{t-1} I_K$. 
	Since, for any $1 \leq \alpha \leq p-1$, we have $w_L(\alpha \cdot p^{t-1}) \leq p^{t-1}$, it follows immediately that $d_L(\C) = p^{t-1}$. 
	With reference to part \ref{pt: n=k} of Lemma \ref{lem: properties of Phi}, we obtain equality in (\ref{eqn: Phi p^t}), so
	\[
	\Phi(K,K,p^t) = p^{t-1}.
	\]
\end{example}
\begin{example}\label{example: n=k+1, prime power}
	With $G_1$ as in Example \ref{example: n=k n=k+1, prime}, the code $\C_1$ over $\Z_{p^t}$ with generator matrix $p^{t-1} G_1$ satisfies $d_L(\C_1) = 2 \cdot p^{t-1}$, giving
	
	\[\Phi(K+1,K,p^t) \ge 2 \cdot p^{t-1}.\]
	
	Consequently, if $p \in \{2,3\}$, we have equality in (\ref{eqn: Phi p^t}), and 
	\[
	\Phi(K+1,K,p^t) = 2 \cdot p^{t-1}.
	\]
\end{example}

We are now able to state our main result.

\begin{theorem} \label{thm: main thm}
	Let $p$ be prime. If  $t \geq 1$, and $K \geq 2$, then the following bounds hold:
	\begin{itemize}
		\item[] 
		If $n \geq K$, then 
		\begin{equation} \label{eqn: Main general}   \tag{A}
			\Phi(n,K,p^t) 	\leq  p^{t-1} \left\lfloor \mu_p\left(n-K+1- \left\lfloor \frac{n-K+1}{p+1}  \right\rfloor\right)\right\rfloor.
		\end{equation}
		
		\item[]  
		If $K \geq p$ and $n > K+1$, or if $2 \leq K \leq p$ and $n > p+1$, then  
		\begin{equation} \label{eqn: Main NotMDS} \tag{B}
			\Phi(n,K,p^t) \leq p^{t-1} \left\lfloor \mu_p(n-K) \right\rfloor.
		\end{equation}   
		
		\item[]  
		If $3 \leq K+1 \leq n \leq p+1$, then  
		\begin{equation} \label{eqn: Main MDS} \tag{C}
			\Phi(n,K,p^t) \leq p^{t-1} \cdot \left\lfloor \mu_p(n-K+1) \cdot \frac{p-1}{p} + 1 \right\rfloor.
		\end{equation}
		
		\item[] In the case $n = K$, we have 
		\begin{equation}  \label{eqn: Main MDS n less than K} \tag{D}
			\Phi(K,K,p^t) = p^{t-1}.
		\end{equation}	
	\end{itemize}
\end{theorem}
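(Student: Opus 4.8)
The plan is to derive all four bounds from their counterparts over the prime field $\Z_p$ by invoking the socle reduction of Lemma~\ref{lem: Phi p^t}, which gives $\Phi(n,K,p^t)\le p^{t-1}\Phi(n,K,p)$. Every prime-case result I intend to cite was proved for rank $k=K\ge 2$, so the standing hypothesis $K\ge 2$ is exactly what licenses their use. Case~(D), $n=K$, is not an inequality to be established but an equality already in hand: Example~\ref{example: n=k, prime power} exhibits the code with generator matrix $p^{t-1}I_K$ realizing $d_L=p^{t-1}$, and Lemma~\ref{lem: Phi p^t} together with part~\ref{pt: n=k} of Lemma~\ref{lem: properties of Phi} supplies the matching upper bound, so I would simply cite that example.

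For bound~(A) I would combine Lemma~\ref{lem: Phi p^t} with Corollary~\ref{cor: floor bound prime case}, which yields $\Phi(n,K,p)\le \mu_p\bigl(n-K+1-\lfloor(n-K+1)/(p+1)\rfloor\bigr)$. The one extra observation needed is that $\Phi(n,K,p)$ is an integer (a minimum Lee distance over $\Z_p$ is a sum of integer Lee weights), so the right-hand side may be replaced by its floor before multiplying through by the integer $p^{t-1}$; this is precisely the floor that appears inside~(A). Bound~(B) is more direct still: under its hypotheses ($K\ge p$ with $n>K+1$, or $2\le K\le p$ with $n>p+1$) Corollary~\ref{cor: bound for MDS prime} gives $\Phi(n,K,p)\le\lfloor\mu_p(n-K)\rfloor$ at once, and Lemma~\ref{lem: Phi p^t} finishes the job. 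I expect both of these parts to be routine bookkeeping.

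The substantive part is~(C), the MDS regime $3\le K+1\le n\le p+1$. Here I would appeal to Theorem~\ref{thm: main theorem bound}, which splits any rank-$K$ code $\C$ over $\Z_p$ into two alternatives: if $\C$ is MDS then $d_L(\C)\le\lfloor\mu_p(n-K+1)\cdot\frac{p-1}{p}\rfloor+1$, whereas if $\operatorname{def}(\C)=s>0$ then $d_L(\C)\le\mu_p(n-K+1-s)\le\mu_p(n-K)$. To conclude that $\Phi(n,K,p)$ is bounded by the MDS expression $B:=\lfloor\mu_p(n-K+1)\cdot\frac{p-1}{p}\rfloor+1$, I must verify that the non-MDS bound never exceeds $B$ in this range, i.e. that $\mu_p(n-K)\le B$. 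This is the one place demanding a genuine computation rather than a citation. Setting $d=n-K+1$, the hypotheses $n\le p+1$ and $K\ge 2$ force $d\le p$; using $\lfloor x\rfloor+1\ge x$ to write $B\ge\frac{p^2-1}{4p}\,d$ and comparing with $\mu_p(n-K)=\frac{p+1}{4}(d-1)$ reduces the desired inequality to $\frac{p+1}{4}\cdot\frac{p-d}{p}\ge 0$, which holds exactly because $d\le p$ (the degenerate $p=2$ subcase collapses to $K=2,\,n=3$ and is checked directly). Once this dominance is in place, $\Phi(n,K,p)\le B$ and Lemma~\ref{lem: Phi p^t} delivers~(C). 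I expect this comparison to be the main obstacle, since it is where the hypothesis $n\le p+1$ is actually consumed, where the two branches of Theorem~\ref{thm: main theorem bound} must be fused into a single clean bound, and where the floor functions need to be handled with care.
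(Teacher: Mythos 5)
Your proposal is correct and follows exactly the paper's route: the paper's proof is a one-line citation combining Lemma~\ref{lem: Phi p^t} with Corollary~\ref{cor: floor bound prime case} for (A), Corollary~\ref{cor: bound for MDS prime} for (B), Theorem~\ref{thm: main theorem bound} for (C), and Example~\ref{example: n=k, prime power} for (D). The two details you work out explicitly --- the integrality of $\Phi(n,K,p)$ justifying the floor in (A), and the verification that $\mu_p(n-K)\le \lfloor \mu_p(n-K+1)\cdot\tfrac{p-1}{p}\rfloor+1$ when $n\le p+1$, so that both branches of Theorem~\ref{thm: main theorem bound} are dominated by the MDS bound in (C) --- are left implicit in the paper (the latter only alluded to in the remark preceding Table~\ref{tab:optimal_bounds}), so your filling them in is a faithful completion rather than a departure.
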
  

\begin{proof}
	Appealing to Lemma \ref{lem: Phi p^t},   (\ref{eqn: Main general})   follows from Corollary \ref{cor: floor bound prime case},   (\ref{eqn: Main NotMDS}) follows from Corollary \ref{cor: bound for MDS prime}, and (\ref{eqn: Main MDS}) follow from Theorem \ref{thm: main theorem bound}. For (\ref{eqn: Main MDS n less than K}) see Example \ref{example: n=k, prime power}.
\end{proof}

In comparing the bounds in Theorem \ref{thm: main thm}, we observe that when the conditions for bound (\ref{eqn: Main NotMDS}) are satisfied, it provides a stronger bound than (\ref{eqn: Main general}) whenever $n-K+1 < p+1$. Additionally, simple calculations show that when the conditions for bound (\ref{eqn: Main MDS}) hold, it subsumes (\ref{eqn: Main general}) and offers a strict improvement in many cases, particularly when $n > K+5$. These observations are illustrated in Section \ref{sec: comparisons}, through Figures \ref{fig:Prime_n_3_7}-\ref{fig:Plot_n_11,13^2}, and Table \ref{table: bounds compared}.

Just as in the prime case, codes that are optimal with respect to the Lee metric must also be optimal with respect to the Hamming metric. The following result observed by Byrne and Weger \cite{Byrne2023} may be employed to establish this connection:

\begin{lemma}\label{lem: MDR iff MDS Socle}
	Let $q=p^t$, $p$ prime.  A linear code over $\Z_q$ is MDR if and only if its socle is classically equivalent to an $[n,K]_p$-MDS code. 
\end{lemma}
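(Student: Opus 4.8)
The plan is to reduce the statement to a direct application of the classical Singleton bound to the socle, once the socle is recognized as an $\F_p$-linear code. First I would fix the operative definition: an MDR code is a rank-$K$ linear code $\C\subseteq(\Z_{p^t})^n$ meeting the Singleton bound of Lemma \ref{lem: Singleton MDR} with equality, i.e.\ one with $d_H(\C)=n-K+1$. The entire argument hinges on comparing this equality condition for $\C$ with the MDS condition for $S(\C)$.

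Next I would make the identification of the socle with a classical $p$-ary code explicit. Since $S(\C)=\langle p^{t-1}\rangle\cap\C$ lies inside $\langle p^{t-1}\rangle^n$, and the additive group $\langle p^{t-1}\rangle=\{0,p^{t-1},2p^{t-1},\ldots,(p-1)p^{t-1}\}$ is isomorphic to $\Z_p$ via $a\,p^{t-1}\mapsto a$, the code $S(\C)$ is naturally a $\Z_p$-linear code of length $n$. This coordinatewise identification preserves the support of each codeword (the entry $a\,p^{t-1}$ vanishes precisely when $a=0$), hence preserves Hamming weights and the value $d_H(S(\C))$; this is exactly the sense in which $S(\C)$ ``is classically equivalent to'' an $[n,K]_p$ code.

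The heart of the argument is the invariance of the three basic parameters under passage to the socle. By the result of Horimoto and Shiromoto \cite{Horimoto-manuscript200} (see also \cite{Kalachi2021}) recalled in Section \ref{sec: Prelim}, $S(\C)$ has the same length, rank, and minimum Hamming distance as $\C$; in particular $\dim_{\F_p}S(\C)=K$ and $d_H(S(\C))=d_H(\C)$, so $S(\C)$ is a genuine $[n,K,d_H(\C)]_p$ code. Applying the classical Singleton bound to $S(\C)$ gives $d_H(S(\C))\le n-K+1$, with equality if and only if $S(\C)$ is MDS. Chaining the equivalences yields
\[
\C \text{ is MDR} \iff d_H(\C)=n-K+1 \iff d_H(S(\C))=n-K+1 \iff S(\C)\text{ is MDS},
\]
which is the claim.

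I expect the only genuinely delicate point to be the invariance step, namely that $S(\C)$ inherits both the rank $K$ and the full minimum distance of $\C$; everything else is bookkeeping once the socle is viewed as an $\F_p$-vector space. Since that invariance is precisely the cited Horimoto--Shiromoto fact, I would invoke it rather than reprove it, and the remainder reduces to the short equivalence chain displayed above.
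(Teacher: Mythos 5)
Your proof is correct. The paper itself states this lemma without proof, attributing it to Byrne and Weger \cite{Byrne2023}; your argument---identifying $S(\C)$ with an $[n,K]_p$ code via $a\,p^{t-1}\mapsto a$, invoking the Horimoto--Shiromoto invariance of length, rank, and minimum Hamming distance recalled in Section \ref{sec: Prelim}, and then applying the classical Singleton bound to the socle---is precisely the argument the paper's preliminaries are set up to support, so it matches the intended route.
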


An immediate consequence of this is the following

\begin{lemma}\label{lem: LDimpliesMDS prime power}
	Let $p$ be prime. 	If $\C$ is a linear $[n,K,d]_q$ code, $q=p^t$, and $d_L(\C)>p^{t-1}\mu_p(n-K)$ then $\C$ is MDR. In particular, if $\C$ is also a free code, then $\C$ is MDS. 
\end{lemma}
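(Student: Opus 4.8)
The plan is to prove the contrapositive, using the socle to transfer the question to the prime field $\Z_p$. Suppose $\C$ is not MDR. By Lemma~\ref{lem: MDR iff MDS Socle} its socle $S(\C)=\vspan{p^{t-1}}\cap\C$, viewed as a code over the alphabet $\vspan{p^{t-1}}\cong\Z_p$, fails to be classically equivalent to an $[n,K]_p$-MDS code; that is, the associated $[n,K]_p$-code is not MDS. I would then invoke the contrapositive of Lemma~\ref{lem:LDimpliesMDS}: a non-MDS $[n,K]_p$-code has Lee distance at most $\lfloor \mu_p(n-K)\rfloor$ when measured over $\Z_p$.

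The key step is to relate the Lee distance of the socle computed in $\Z_{p^t}$ to the Lee distance of its $\Z_p$ image. Every nonzero socle element has the form $\alpha p^{t-1}$ with $1\le\alpha\le p-1$, and a direct computation gives $w_L(\alpha p^{t-1})=\min\{\alpha p^{t-1},\,p^t-\alpha p^{t-1}\}=p^{t-1}\min\{\alpha,p-\alpha\}$, exactly $p^{t-1}$ times the $\Z_p$-Lee weight of $\alpha$. Summing over coordinates shows that the $\Z_{p^t}$-Lee distance of $S(\C)$ equals $p^{t-1}$ times the $\Z_p$-Lee distance of its image, so the non-MDS bound above yields $d_L(S(\C))\le p^{t-1}\lfloor\mu_p(n-K)\rfloor$. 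Since $S(\C)\subseteq\C$, the minimum over the subcode dominates, giving $d_L(\C)\le d_L(S(\C))$ exactly as in the proof of Lemma~\ref{lem: Phi p^t}. Combining these yields $d_L(\C)\le p^{t-1}\lfloor\mu_p(n-K)\rfloor\le p^{t-1}\mu_p(n-K)$, contradicting the hypothesis $d_L(\C)>p^{t-1}\mu_p(n-K)$. Hence $\C$ is MDR.

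For the final assertion I would observe that a free code of rank $K$ has $|\C|=p^{tK}$, so its $p^t$-type satisfies $\kappa=K$. An MDR code meets the rank-based Singleton bound of Lemma~\ref{lem: Singleton MDR}, namely $d_H(\C)=n-K+1$; combined with $\kappa=K$ this reads $d_H(\C)=n-\kappa+1$, which is precisely the MDS condition. Thus a free MDR code is MDS.

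Given the cited results, the argument is mostly careful bookkeeping, but the point demanding attention is the precise scaling $w_L(\alpha p^{t-1})=p^{t-1}w_L^{\Z_p}(\alpha)$ on the socle. It is this identity, together with the sharp non-MDS bound $\lfloor\mu_p(n-K)\rfloor$, that produces a bound strong enough to contradict the strict hypothesis; a looser estimate of the socle's Lee distance would not suffice. I would also take care to distinguish the rank-based notion of MDR from the type-based notion of MDS, since their coincidence is exactly what the freeness assumption supplies.
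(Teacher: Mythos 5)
Your proof is correct and follows essentially the same route as the paper: argue the contrapositive, use Lemma~\ref{lem: MDR iff MDS Socle} to conclude the socle is not MDS, and then transfer the prime-case averaging bound of Lemma~\ref{lem:LDimpliesMDS} to the socle via the scaling $w_L(\alpha p^{t-1})=p^{t-1}\min\{\alpha,p-\alpha\}$, together with $d_L(\C)\le d_L(S(\C))$. The only cosmetic difference is that the paper reruns the averaging argument directly on $\vspan{x}$ for a minimum-Hamming-weight socle codeword rather than citing Lemma~\ref{lem:LDimpliesMDS} as a black box, and your explicit verification of the ``free $\Rightarrow$ MDS'' claim via $\kappa=K$ is a welcome detail the paper leaves implicit.
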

\begin{proof}
	Suppose $\C$ is not MDR. Let $S(\C)$ denote the socle of $\C$.  By the Lemma \ref{lem: MDR iff MDS Socle},  $S(\C)$ is not MDS. Let $x\in S(\C)$ be a codeword of minimum Hamming weight, so in particular  $w_H(x)\le n-K$. Assume the nonzero entries are the first $w_H(x)$ entries.   Since $x\in S(\C) = \C\cap \vspan{p^{t-1}}$, $\vspan{x}$ holds $p-1$ nonzero codewords all of which are nonzero in the first $d_H$ coordinates, and no two agreeing in any of the first $w_H(x)$ coordinates. Averaging as in Lemma \ref{lem:LDimpliesMDS}, and recognizing that each coordinate in $S(\C)$ is an element of $\vspan{p^{t-1}}$, we obtain 
	\[
	d_L(\C)\le p^{t-1}\mu_p(n-K).
	\]       
	The result follows.
\end{proof}

\subsection{Linear Codes Over $\Z_q$, $q=2^t$}

It is well known (see e.g. \cite{MR0465509}) that the only binary linear MDS codes are the trivial ones, namely the $[n,1,n]_2 $  repetition  codes, the  $ [n,n,1]_2 $  universe  codes, and the $  [k+1,k,2]_2 $ single-parity-check codes. This allows us to show the following.

\begin{lemma}\label{lem: MDR even case}
	Let $\C$ be a linear $[n,K,d]_q$ MDR code, $q=2^t$. One of the following must hold:
	\begin{itemize}
		\item $K=1$ and  $d_L(\C)\le n\cdot 2^{t-1}$
		\item $n=K>1$ and $d_L(\C) \le 2^{t-1}$
		\item  $n=K+1$ and $d_L(\C) \le 2^{t}$
	\end{itemize}
\end{lemma}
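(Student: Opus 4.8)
The plan is to combine Lemma~\ref{lem: MDR iff MDS Socle} with the classification of binary linear MDS codes. Since $\C$ is an MDR code over $\Z_{2^t}$, Lemma~\ref{lem: MDR iff MDS Socle} tells us that its socle $S(\C)$ is (classically) equivalent to an $[n,K]_2$-MDS code. But the cited classification (\cite{MR0465509}) says the only binary linear MDS codes are the repetition codes $[n,1,n]_2$, the universe codes $[n,n,1]_2$, and the single-parity-check codes $[k+1,k,2]_2$. So the proof is really just a case split: the socle being MDS forces $(n,K)$ into exactly one of these three families, and in each family I read off the Hamming distance $d_H$ of $S(\C)$, then lift to a Lee-distance bound on $\C$ itself.

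The lifting step is the engine of the argument, and it is the same averaging idea used in Lemma~\ref{lem: LDimpliesMDS prime power}. Recall $S(\C) = \C \cap \langle 2^{t-1}\rangle$, so every entry of a socle codeword lies in $\{0, 2^{t-1}\}$ and hence has Lee weight $0$ or $2^{t-1}$ (since $w_L(2^{t-1}) = \min\{2^{t-1}, 2^t - 2^{t-1}\} = 2^{t-1}$). First I would invoke the fact (attributed to Horimoto--Shiromoto, stated in Section~\ref{sec: Prelim}) that $\C$ and $S(\C)$ share the same length, rank, and minimum \emph{Hamming} distance; combined with $d_L(\C) \le d_L(S(\C))$ this reduces everything to understanding the minimum Lee weight of $S(\C)$, which is just $2^{t-1}$ times the minimum Hamming weight of the underlying binary MDS code. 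Then each case is immediate:
\begin{itemize}
	\item[] If $K = 1$, the socle is a binary repetition code of length $n$ and minimum distance $n$, but here I want an upper bound on $d_L(\C)$; taking the minimum-weight (indeed the unique nonzero) codeword of the repetition code, which has Hamming weight $n$, gives Lee weight $n\cdot 2^{t-1}$, so $d_L(\C)\le n\cdot 2^{t-1}$.
	\item[] If $n = K > 1$, the socle is a universe code with $d_H = 1$, so its minimum Lee weight is $2^{t-1}$, giving $d_L(\C)\le 2^{t-1}$.
	\item[] If $n = K+1$, the socle is a single-parity-check code with $d_H = 2$, so its minimum Lee weight is $2\cdot 2^{t-1} = 2^t$, giving $d_L(\C)\le 2^t$.
\end{itemize}

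The main subtlety I anticipate is twofold. First, the three cases must be shown to be \emph{exhaustive}: I must argue that an MDR code over $\Z_{2^t}$ cannot have parameters outside these three families, which follows precisely because its socle must be one of the three binary MDS types and these force $(n,K) \in \{(n,1), (K,K), (K+1,K)\}$. Second, I should be careful about the direction of the inequalities and about how $d_L(\C)$ relates to $d_L(S(\C))$: the clean containment $d_L(\C)\le d_L(S(\C))$ from the proof of Lemma~\ref{lem: Phi p^t}, together with the fact that socle codewords of minimum Hamming weight realize the minimum Lee weight $2^{t-1}\cdot d_H$, is exactly what delivers each bound. A minor point worth checking is the $K=1$ case, where the bound $n\cdot 2^{t-1}$ is consistent with part~\ref{pt: Wood 1-d codes} of Lemma~\ref{lem: properties of Phi} and with Example~\ref{example: n=k, prime power}; apart from verifying that the Lee weight of $2^{t-1}$ really is $2^{t-1}$ (which it is, since $2\mid 2^t$), no genuine obstacle remains.
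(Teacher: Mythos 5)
Your proposal is correct and follows essentially the same route as the paper: invoke Lemma~\ref{lem: MDR iff MDS Socle} to conclude the socle is a binary MDS code over the alphabet $\{0,2^{t-1}\}$, apply the classification of binary MDS codes to force $(n,K)$ into the three families, and read off the Lee bounds since each socle codeword has Lee weight $2^{t-1}$ times its Hamming weight and $S(\C)\subseteq\C$ gives $d_L(\C)\le d_L(S(\C))$. The paper's proof is just a terser version of this argument, leaving the case analysis and the Lee-weight lifting implicit.
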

\begin{proof}
	Since $\C$ is MDR, the socle, $S(\C)$ is a binary $[n,k,d]_2$-MDS code over the alphabet $\{2^{t-1},0\}$.  All such codes are classified, giving the result.
\end{proof}

\begin{corollary}\label{lem: LDimpliesMDS even prime power}
	If  $n>K+1$,  then $\Phi(n,K,2^t)\le 2^{t-1}(n-K)$ \end{corollary}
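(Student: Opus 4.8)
The plan is to argue by contradiction, combining the two immediately preceding lemmas while exploiting the special value of $\mu_2$. First I would record that for $p=2$ we have $\mu_2 = \frac{2^2}{4(2-1)} = 1$, so the threshold appearing in Lemma \ref{lem: LDimpliesMDS prime power}, namely $p^{t-1}\mu_p(n-K)$, collapses to exactly $2^{t-1}(n-K)$ --- precisely the quantity in the claimed bound. This alignment is what makes the corollary a near-immediate consequence of the preceding results rather than a fresh counting argument.

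Next, suppose toward a contradiction that some rank-$K$ code $\C \subseteq (\Z_{2^t})^n$ satisfies $d_L(\C) > 2^{t-1}(n-K)$. Then Lemma \ref{lem: LDimpliesMDS prime power}, applied with $p=2$, forces $\C$ to be MDR. At this point I would invoke the classification in Lemma \ref{lem: MDR even case}: any MDR code over $\Z_{2^t}$ must satisfy one of $K=1$, $n=K$, or $n=K+1$. Under the hypothesis $n > K+1$ the two cases $n=K$ and $n=K+1$ are excluded at once, and (with $K \geq 2$) the case $K=1$ is excluded as well, yielding the contradiction. Hence no such $\C$ can exist, giving $\Phi(n,K,2^t) \leq 2^{t-1}(n-K)$.

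The only genuine obstacle is the status of the case $K=1$. A rank-one code generated by $2^{t-1}(1,\dots,1)$ has socle equal to the binary repetition code and attains Lee distance $n\cdot 2^{t-1} > 2^{t-1}(n-1)$, so the stated bound in fact fails when $K=1$; consequently the corollary must be understood under the standing assumption $K \geq 2$, consistent with Theorem \ref{thm: main thm}. I would therefore make this restriction explicit so that discarding the $K=1$ branch of Lemma \ref{lem: MDR even case} is legitimate, after which the remaining branches $n=K$ and $n=K+1$ are ruled out directly by $n > K+1$ and the proof closes.
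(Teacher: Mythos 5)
Your proof is correct and follows essentially the paper's own route: the paper's proof is a one-line appeal to Lemma \ref{lem: MDR even case}, which tacitly requires handling the non-MDR case via Lemma \ref{lem: LDimpliesMDS prime power} (with $\mu_2 = 1$) exactly as you make explicit. Your further point is also well taken: the stated bound genuinely fails for $K=1$ (e.g., the rank-one code $\vspan{2^{t-1}(1,\dots,1)}$ has $d_L(\C) = n\cdot 2^{t-1} > 2^{t-1}(n-1)$), so the corollary must be read with the standing assumption $K\ge 2$ of Theorem \ref{thm: main thm}, under which your exclusion of the $K=1$ branch of Lemma \ref{lem: MDR even case} is legitimate.
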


\begin{proof} 	
	Follows from Lemma \ref{lem: MDR even case}.  
\end{proof}

\section{Some Contextual Comparisons}\label{sec: comparisons}

We now present numerical comparisons of the bounds discussed in this work. In the following plots, we exclude bounds that yield significantly higher upper estimates for $\Phi(n,K,p^t)$, as our focus is on obtaining the tightest upper bounds.

Some of the existing bounds on $\Phi(n,k,p^t)$ presented in Section \ref{subsec: Established bounds} are most effective when $t=1$, in which case all codes are free codes. Given this, we begin by comparing bounds over $\mathbb{Z}_p$.

\subsection*{Numerical Comparisons Over $\Z_p$}

When working over $\Z_p$, the bounds in Corollaries \ref{cor: Chiang and Wolf} and \ref{cor: ChiangWolfgeneral}, and Theorem \ref{thm: ByrneWeger} coincide. Consequently, in the comparisons that follow, we use the bound in Corollary \ref{cor: Chiang and Wolf} (due to Chiang and Wolf) as a representative for all three.\\   

\begin{minipage}{.5\textwidth}
	\begin{tikzpicture}
		\begin{axis}[
			width=8.5cm, height=8cm,
			xlabel={$n$},
			ylabel={ $\Phi(n,3,7)\le $},
			title={},
			legend pos=north west,
			y post scale=1,
			x post scale=1,
			grid=major
			]
			
			\addplot[smooth,mark=otimes,orange] coordinates {(15,24) (16,26) (17,28) (18,30) (19,32) (20,34) (21,36) (22,38) (23,40) (24,42) (25,44) (26,46) (27,48) (28,50) (29,52) (30,54) (31,56) (32,58) (33,60) (34,62) };
			\addlegendentry{Thm.  \ref{thm: main thm} (B) }
			\addplot[smooth,mark=triangle,green] coordinates {(15,26) (16,28) (17,30) (18,32) (19,34) (20,36) (21,38) (22,40) (23,42) (24,44) (25,46) (26,48) (27,50) (28,52) (29,54) (30,56) (31,58) (32,60) (33,62) (34,64) };
			\addlegendentry{Cor. \ref{cor: Chiang and Wolf} }
			\addplot[smooth,mark=star,black] coordinates {(15,25) (16,27) (17,29) (18,30) (19,32) (20,34) (21,36) (22,37) (23,39) (24,41) (25,42) (26,44) (27,46) (28,48) (29,49) (30,51) (31,53) (32,55) (33,56) (34,58) };
			\addlegendentry{Cor. \ref{cor: Wyner and Graham MLDR}}
			\addplot[smooth,mark=*,orange] coordinates {(15,24) (16,26) (17,28) (18,28) (19,30) (20,32) (21,34) (22,36) (23,38) (24,40) (25,42) (26,42) (27,44) (28,46) (29,48) (30,50) (31,52) (32,54) (33,56) (34,56) };
			\addlegendentry{Thm. \ref{thm: main thm} (A)}	
		\end{axis}
	\end{tikzpicture}
	\captionof{figure}{Bounds on $\Phi(n,3,7)$} \label{fig:Prime_n_3_7}  \vspace*{5mm}
	\vspace*{1cm}
\end{minipage}
\begin{minipage}{.5\textwidth}
	\begin{tikzpicture}
		\begin{axis}[
			width=8.5cm, height=8cm,
			xlabel={$n$},
			ylabel={$\Phi(n,4,37)\le$},
			title={},
			legend pos=north west,
			y post scale=1,
			grid=major
			]
			
			\addplot[smooth,mark=o,orange] coordinates {(24,195) (25,204) (26,213) (27,222) (28,232) (29,241) (30,250) (31,259) (32,269) (33,278) (34,287) (35,296) (36,306) (37,315) };
			\addlegendentry{Thm. \ref{thm: main thm} (C)}
			\addplot[smooth,mark=triangle,green] coordinates {(24,199) (25,209) (26,218) (27,228) (28,237) (29,247) (30,256) (31,266) (32,275) (33,285) (34,294) (35,304) (36,313) (37,323) };
			\addlegendentry{Cor. \ref{cor: Chiang and Wolf}, Thm. \ref{thm: main thm} (A)}
			\addplot[smooth,mark=star,black] coordinates {(24,221) (25,231) (26,240) (27,249) (28,258) (29,268) (30,277) (31,286) (32,295) (33,305) (34,314) (35,323) (36,332) (37,342) };
			\addlegendentry{Cor. \ref{cor: Wyner and Graham MLDR}}
		\end{axis}
	\end{tikzpicture}
	\captionof{figure}{Bounds on $\Phi(n,4,37)$} \label{fig:Prime_n_4_37} \vspace*{5mm}
	\vspace*{1cm}
\end{minipage}

\begin{minipage}{.5\textwidth}
	\begin{tikzpicture}
		\begin{axis}[
			width=8.5cm, height=8cm,
			xlabel={$K$},
			ylabel={$\Phi(2K,K,5)\le$},
			title={},
			legend pos=north west,
			y post scale=1,
			grid=major
			]
			
			\addplot[smooth,mark=otimes,orange] coordinates {(3,12) (5,15) (7,18) (9,21) (11,24) (13,27) (15,30) (17,33) (19,36) (21,39) (23,42) (25,45) (27,48) (29,51) (31,54) (33,57) (35,60) (37,63) (39,66) };
			\addlegendentry{Thm. \ref{thm: main thm} (B)}
			\addplot[smooth,mark=triangle,green] coordinates {(3,13) (5,16) (7,19) (9,22) (11,25) (13,28) (15,31) (17,34) (19,37) (21,40) (23,43) (25,46) (27,49) (29,52) (31,55) (33,58) (35,61) (37,64) (39,67) };
			\addlegendentry{Cor. \ref{cor: Chiang and Wolf}}
			\addplot[smooth,mark=x,blue] coordinates {(3,16) (5,20) (7,24) (9,28) (11,32) (13,36) (15,40) (17,44) (19,48) (21,52) (23,56) (25,60) (27,64) (29,68) (31,72) (33,76) (35,80) (37,84) (39,88) };
			\addlegendentry{Cor. 6}
			\addplot[smooth,mark=star,black] coordinates {(3,13) (5,18) (7,22) (9,27) (11,32) (13,37) (15,42) (17,46) (19,51) (21,56) (23,61) (25,66) (27,70) (29,75) (31,80) (33,85) (35,90) (37,94) (39,99) };
			\addlegendentry{Cor. \ref{cor: Wyner and Graham MLDR}}
			\addplot[smooth,mark=*,orange] coordinates {(3,12) (5,15) (7,16) (9,19) (11,22) (13,24) (15,27) (17,30) (19,31) (21,34) (23,37) (25,39) (27,42) (29,45) (31,46) (33,49) (35,52) (37,54) (39,57) };
			\addlegendentry{Thm. \ref{thm: main thm} (A)}
			
		\end{axis}
	\end{tikzpicture}
	\captionof{figure}{Bounds on $\Phi(2K+5,K,5)$} \label{fig:Prime_2K_K_5} \vspace*{5mm}
\end{minipage}
\begin{minipage}{.5\textwidth}
	\begin{tikzpicture}
		\begin{axis}[
			width=8.5cm, height=8cm,
			xlabel={$n$},
			ylabel={$\Phi(n,10,5)\le $},
			title={},
			legend pos=north west,
			y post scale=1,
			grid=major
			]
			
			\addplot[smooth,mark=otimes,orange] coordinates {(25,22) (26,24) (27,25) (28,27) (29,28) (30,30) (31,31) (32,33) (33,34) (34,36) (35,37) (36,39) (37,40) (38,42) (39,43) };
			\addlegendentry{Thm. \ref{thm: main thm} (B)}
			\addplot[smooth,mark=triangle,green] coordinates {(25,24) (26,25) (27,27) (28,28) (29,30) (30,31) (31,33) (32,34) (33,36) (34,37) (35,39) (36,40) (37,42) (38,43) (39,45) };
			\addlegendentry{Cor. \ref{cor: Chiang and Wolf}}
			\addplot[smooth,mark=star,black] coordinates {(25,30) (26,31) (27,32) (28,33) (29,34) (30,36) (31,37) (32,38) (33,39) (34,40) (35,42) (36,43) (37,44) (38,45) (39,46) };
			\addlegendentry{Cor. \ref{cor: Wyner and Graham MLDR}}
			\addplot[smooth,mark=*,orange] coordinates {(25,21) (26,22) (27,22) (28,24) (29,25) (30,27) (31,28) (32,30) (33,30) (34,31) (35,33) (36,34) (37,36) (38,37) (39,37) };
			\addlegendentry{Thm. \ref{thm: main thm} (A)}
			
		\end{axis}
	\end{tikzpicture}
	\captionof{figure}{Bounds on $\Phi(n,10,5)$} \label{fig:Prime_n_10_5}  \vspace*{5mm}
\end{minipage}

Figures \ref{fig:Prime_n_3_7} to \ref{fig:Prime_n_10_5}  illustrate that the bounds obtained from Theorem \ref{thm: main thm} consistently provide the least of all values when compared to the existing bounds, representing improved estimates of $\Phi(n, k, p)$. Next, we compare bounds on MLDR codes over $\Z_p^t$, $t>1$.

\subsection*{Numerical Comparisons of Bounds on $\Phi(n,K,p^t)$, $t>1$}

The plots in Figures \ref{fig:Plot_2K_K_3^5}-\ref{fig:Plot_n_20_5^3}, as well at the values in Table \ref{table: bounds compared} illustrate that in the case $q=p^t$ with $t>1$, the bounds obtained from Theorem \ref{thm: main thm} consistently yield the least values.

\begin{minipage}{.5\textwidth}
	\begin{tikzpicture}
		\begin{axis}[
			width=8.5cm, height=8cm,
			xlabel={$K$},
			ylabel={ $\Phi(2K,K,3^5)\le $},
			title={},
			legend pos=north west,
			y post scale=1,
			grid=major
			]
			
			\addplot[smooth,mark=otimes,orange] coordinates {(5,405) (6,486) (7,567) (8,648) (9,729) (10,810) (11,891) (12,972) (13,1053) (14,1134) (15,1215) (16,1296) (17,1377) (18,1458) (19,1539) };
			\addlegendentry{Thm. \ref{thm: main thm} (B)}
			\addplot[smooth,mark=triangle,red] coordinates {(5,486) (6,567) (7,648) (8,729) (9,810) (10,891) (11,972) (12,1053) (13,1134) (14,1215) (15,1296) (16,1377) (17,1458) (18,1539) (19,1620) };
			\addlegendentry{Thm. \ref{thm: ByrneWeger}}
			\addplot[smooth,mark=triangle,green] coordinates {(5,610) (6,732) (7,854) (8,976) (9,1098) (10,1159) (11,1281) (12,1403) (13,1525) (14,1647) (15,1708) (16,1830) (17,1952) (18,2074) (19,2196) };
			\addlegendentry{Cor. \ref{cor: ChiangWolfgeneral}}
			\addplot[smooth,mark=star,black] coordinates {(5,610) (6,729) (7,850) (8,972) (9,1093) (10,1215) (11,1336) (12,1457) (13,1579) (14,1700) (15,1822) (16,1943) (17,2065) (18,2186) (19,2308) };
			\addlegendentry{Cor. \ref{cor: Wyner and Graham MLDR}}
			\addplot[smooth,mark=*,orange] coordinates {(5,405) (6,486) (7,486) (8,567) (9,648) (10,729) (11,729) (12,810) (13,891) (14,972) (15,972) (16,1053) (17,1134) (18,1215) (19,1215) };
			\addlegendentry{Thm. \ref{thm: main thm} (A)}

		\end{axis}
	\end{tikzpicture}
	\captionof{figure}{Bounds on $\Phi(2K,K,3^5)$} \label{fig:Plot_2K_K_3^5}
\end{minipage}
\begin{minipage}{.5\textwidth}
	\begin{tikzpicture}
		\begin{axis}[
			width=8.5cm, height=8cm,
			xlabel={$K$},
			ylabel={$\Phi(2K,K,2^4)\le $},
			title={},
			legend pos=north west,
			y post scale=1,
			grid=major
			]
			
			\addplot[smooth,mark=triangle,red] coordinates {(4,40) (5,48) (6,56) (7,64) (8,72) (9,80) (10,88) (11,96) (12,104) (13,112) (14,120) (15,128) (16,136) (17,144) (18,152) (19,160) };
			\addlegendentry{Thm. \ref{thm: ByrneWeger}}
			\addplot[smooth,mark=triangle,green] coordinates {(4,34) (5,42) (6,51) (7,59) (8,64) (9,72) (10,81) (11,89) (12,93) (13,102) (14,110) (15,119) (16,123) (17,132) (18,140) (19,149) };
			\addlegendentry{Cor. \ref{cor: ChiangWolfgeneral}}
			\addplot[smooth,mark=star,black] coordinates {(4,34) (5,41) (6,48) (7,56) (8,64) (9,72) (10,80) (11,88) (12,96) (13,104) (14,112) (15,120) (16,128) (17,136) (18,144) (19,152) };
			\addlegendentry{Cor. \ref{cor: Wyner and Graham MLDR}}
			\addplot[smooth,mark=*,orange] coordinates {(4,32) (5,32) (6,40) (7,48) (8,48) (9,56) (10,64) (11,64) (12,72) (13,80) (14,80) (15,88) (16,96) (17,96) (18,104) (19,112) };
			\addlegendentry{Thm. \ref{thm: main thm} (A)}
			
		\end{axis}
	\end{tikzpicture}
	\captionof{figure}{Bounds on $\Phi(2K,K,2^4)$} \label{fig:Plot_2K_K_2^4}
\end{minipage}

\begin{minipage}{.5\textwidth}
	\begin{tikzpicture}
		\begin{axis}[
			width=8.5cm, height=8cm,
			xlabel={$K$},
			ylabel={$\Phi(\left\lfloor \frac{3K}{2}\right\rfloor,K,2^2)\le$},
			title={},
			legend pos=north west,
			y post scale=1,
			grid=major
			]
			
			\addplot[smooth,mark=otimes,orange] coordinates {(4,4) (5,4) (6,6) (7,6) (8,8) (9,8) (10,10) (11,10) (12,12) (13,12) (14,14) (15,14) (16,16) (17,16) (18,18) (19,18) };
			\addlegendentry{Thm. \ref{thm: main thm} (B)}
			\addplot[smooth,mark=triangle,red] coordinates {(4,6) (5,6) (6,8) (7,8) (8,10) (9,10) (10,12) (11,12) (12,14) (13,14) (14,16) (15,16) (16,18) (17,18) (18,20) (19,20) };
			\addlegendentry{Thm. \ref{thm: ByrneWeger}}
			\addplot[smooth,mark=triangle,green] coordinates {(4,6) (5,8) (6,9) (7,10) (8,12) (9,13) (10,14) (11,16) (12,17) (13,18) (14,20) (15,21) (16,22) (17,24) (18,25) (19,26) };
			\addlegendentry{Cor. \ref{cor: ChiangWolfgeneral}}
			\addplot[smooth,mark=star,black] coordinates {(4,6) (5,7) (6,9) (7,10) (8,12) (9,13) (10,15) (11,16) (12,18) (13,19) (14,21) (15,22) (16,24) (17,25) (18,27) (19,28) };
			\addlegendentry{Cor. \ref{cor: Wyner and Graham MLDR}}
			\addplot[smooth,mark=*,orange] coordinates {(4,4) (5,4) (6,6) (7,6) (8,8) (9,8) (10,8) (11,8) (12,10) (13,10) (14,12) (15,12) (16,12) (17,12) (18,14) (19,14) };
			\addlegendentry{Thm. \ref{thm: main thm} (A)}
			
		\end{axis}
	\end{tikzpicture}
	\captionof{figure}{Bounds on $\Phi(\left\lfloor \frac{3K}{2}\right\rfloor,K,2^2)$} \label{fig:Plot_3K/2_K_2^2}\vspace*{5mm}
\end{minipage}
\begin{minipage}{.5\textwidth}
	\begin{tikzpicture}
		\begin{axis}[
			width=8cm, height=8cm,
			xlabel={$n$},
			ylabel={$\Phi(n,20,5^3)\le $},
			title={},
			legend pos=north west,
			y post scale=1,
			grid=major
			]
			
			\addplot[smooth,mark=otimes,orange] coordinates {(110,3375) (111,3400) (112,3450) (113,3475) (114,3525) (115,3550) (116,3600) (117,3625) (118,3675) (119,3700) (120,3750) (121,3775) (122,3825) (123,3850) (124,3900) (125,3925) (126,3975) (127,4000) (128,4050) (129,4075) };
			\addlegendentry{Thm. \ref{thm: main thm} (B)}
			\addplot[smooth,mark=triangle,red] coordinates {(110,3412) (111,3450) (112,3487) (113,3525) (114,3562) (115,3600) (116,3637) (117,3675) (118,3712) (119,3750) (120,3787) (121,3825) (122,3862) (123,3900) (124,3937) (125,3975) (126,4012) (127,4050) (128,4087) (129,4125) };
			\addlegendentry{Thm. \ref{thm: ByrneWeger}}
			\addplot[smooth,mark=triangle,green] coordinates {(110,3307) (111,3339) (112,3370) (113,3402) (114,3433) (115,3465) (116,3496) (117,3528) (118,3559) (119,3591) (120,3622) (121,3654) (122,3685) (123,3717) (124,3748) (125,3780) (126,3811) (127,3843) (128,3874) (129,3906) };
			\addlegendentry{Cor. \ref{cor: ChiangWolfgeneral}}
			\addplot[smooth,mark=x,black] coordinates {(110,3437) (111,3468) (112,3499) (113,3531) (114,3562) (115,3593) (116,3624) (117,3656) (118,3687) (119,3718) (120,3749) (121,3781) (122,3812) (123,3843) (124,3874) (125,3906) (126,3937) (127,3968) (128,3999) (129,4030) };
			\addlegendentry{Cor. \ref{cor: Wyner and Graham MLDR}}
			\addplot[smooth,mark=*,orange] coordinates {(110,2850) (111,2875) (112,2925) (113,2950) (114,3000) (115,3000) (116,3025) (117,3075) (118,3100) (119,3150) (120,3175) (121,3175) (122,3225) (123,3250) (124,3300) (125,3325) (126,3375) (127,3375) (128,3400) (129,3450) };
			\addlegendentry{Thm. \ref{thm: main thm} (A)}
			
		\end{axis}
	\end{tikzpicture}
	\captionof{figure}{Bounds on $\Phi(n,20,125)$} \label{fig:Plot_n_20_5^3}\vspace*{5mm}
\end{minipage}

\begin{minipage}{.5\textwidth}
	\begin{tikzpicture}
		\begin{axis}[
			width=8.5cm, height=8.5cm,
			xlabel={$n$},
			ylabel={$\Phi(n,3,13^2)\le$},
			title={},
			legend pos=north west,
			y post scale=1,
			grid=major
			]
			
			\addplot[smooth,mark=o,orange] coordinates {(9,299) (10,338) (11,390) (12,429) (13,468) (14,507) };
			\addlegendentry{Thm. \ref{thm: main thm} (C)}
			\addplot[smooth,mark=triangle,red] coordinates {(9,318) (10,364) (11,409) (12,455) (13,500) (14,546) };
			\addlegendentry{Thm. \ref{thm: ByrneWeger}}
			\addplot[smooth,mark=triangle,green] coordinates {(9,382) (10,425) (11,467) (12,510) (13,552) (14,595) };
			\addlegendentry{Cor. \ref{cor: ChiangWolfgeneral}}
			\addplot[smooth,mark=x,black] coordinates {(9,380) (10,422) (11,464) (12,507) (13,549) (14,591) };
			\addlegendentry{Cor. \ref{cor: Wyner and Graham MLDR}}
			\addplot[smooth,mark=*,orange] coordinates {(9,312) (10,364) (11,403) (12,455) (13,494) (14,546) };
			\addlegendentry{Thm. \ref{thm: main thm} (A)}
			
		\end{axis}
	\end{tikzpicture}
	\captionof{figure}{Bounds on $\Phi(n,3,13^2)$} \label{fig:Plot_n_3_13^2}
\end{minipage}
\begin{minipage}{.5\textwidth}
	\begin{tikzpicture}
		\begin{axis}[
			width=8.5cm, height=8.5cm,
			xlabel={$n$},
			ylabel={$\Phi(n,11,13^2)\le$},
			title={},
			legend pos=north west,
			y post scale=1,
			grid=major
			]
			
			\addplot[smooth,mark=otimes,orange] coordinates {(15,182) (16,221) (17,273) (18,312) (19,364) (20,403) (21,455) (22,494) (23,546) (24,585) };
			\addlegendentry{Thm. \ref{thm: main thm} (B)}
			\addplot[smooth,mark=triangle,red] coordinates {(15,227) (16,273) (17,318) (18,364) (19,409) (20,455) (21,500) (22,546) (23,591) (24,637) };
			\addlegendentry{Thm. \ref{thm: ByrneWeger}}
			\addplot[smooth,mark=triangle,green] coordinates {(15,467) (16,510) (17,552) (18,595) (19,637) (20,680) (21,722) (22,765) (23,807) (24,850) };
			\addlegendentry{Cor.  \ref{cor: ChiangWolfgeneral}}
			\addplot[smooth,mark=x,black] coordinates {(15,633) (16,675) (17,718) (18,760) (19,802) (20,844) (21,887) (22,929) (23,971) (24,1013) };
			\addlegendentry{Cor. \ref{cor: Wyner and Graham MLDR}}
			\addplot[smooth,mark=*,orange] coordinates {(15,221) (16,273) (17,312) (18,364) (19,403) (20,455) (21,494) (22,546) (23,585) (24,585) };
			\addlegendentry{Thm. \ref{thm: main thm} (A)}
			
		\end{axis}
	\end{tikzpicture}
	\captionof{figure}{Bounds on $\Phi(n,11,13^2)$} \label{fig:Plot_n_11,13^2}
\end{minipage}

\begin{table}[h]
	\centering	
	\caption{Comparison of upper bounds on $\Phi(n,K,p^t)$}\label{table: bounds compared}
	\label{tab:function_values}
	\begin{tabular}{lccccccc}
		\hline
		 $(n, K, p^t)$ & Cor. \ref{cor: Wyner and Graham MLDR} & Cor. \ref{cor: ChiangWolfgeneral}& Cor. \ref{cor: maintheoremgeneralAH} & Thm. \ref{thm: ByrneWeger} & Thm.  \ref{thm: main thm} (A) & Thm.  \ref{thm: main thm} (B) & Thm.  \ref{thm: main thm} (C) \\
		\hline
		(4, 2, 4) & 5 & 5 & 6 & 6 & 4 & 4 & - \\
		(4, 3, 4) & 4 & 5 & 6 & 4 & 4 & - & - \\
		(5, 3, 4) & 5 & 6 & 8 & 6 & 4 & 4 & - \\
		(6, 3, 4) & 6 & 8 & 10 & 8 & 6 & 6 & - \\
		(5, 3, 9) & 11 & 12 & 16 & 9 & 9 & 6 & - \\
		(6, 3, 9) & 13 & 15 & 20 & 12 & 9 & 9 & - \\
		(7, 3, 9) & 16 & 17 & 24 & 15 & 12 & 12 & - \\
		(28, 3, 9) & 64 & 70 & 108 & 78 & 60 & 75 & - \\
		(12, 3, 11) & 32 & 30 & 45 & 30 & 30 & - & 28 \\
		(13, 3, 11) & 35 & 33 & 50 & 33 & 33 & 30 & - \\
		(15, 3, 11) & 40 & 39 & 60 & 39 & 36 & 36 & - \\
		(5, 3, 27) & 35 & 35 & 52 & 27 & 27 & 18 & - \\
		(6, 3, 27) & 42 & 42 & 65 & 36 & 27 & 27 & - \\
		(28, 3, 27) & 196 & 196 & 351 & 234 & 180 & 225 & - \\
		(6, 3, 25) & 37 & 39 & 60 & 30 & 30 & - & 25 \\
		(6, 3, 125) & 189 & 189 & 310 & 150 & 150 & - & 125 \\
		(8, 3, 49) & 98 & 100 & 168 & 84 & 84 & - & 77 \\
		(12, 3, 121) & 363 & 366 & 660 & 330 & 330 & - & 308 \\
		\hline
	\end{tabular}
\end{table}

\section{Conclusion and Questions}

For $p$ a prime, we introduced the parameter $\Phi(n,K,p^t)$, which denotes the maximum achievable minimum Lee distance over all linear $p^t$-nary codes of length $n$ and rank $K$. Codes whose minimum Lee distance equal to $\Phi(n,K,p^t)$ are (by definition) MLDR codes. 

We established several new upper bounds on $\Phi(n,k,p^t)$. Based on analytical and numerical comparisons, the new bounds consistently yield lower values compared to the existing bounds, representing improved estimates of $\Phi(n, k, p)$. Additionally, we explored some elementary properties of this parameter in the case $t=1$ (Lemma \ref{lem: properties of Phi}).

While our results provide stronger upper bounds, an important open question remains: For $p>3$ are there infinite families of MLDR codes that achieve these new bounds? Identifying such families or proving the tightness of these bounds in general remains an interesting challenge. Additionally, exploring conditions under which the inequalities in parts \ref{pt: n+1 vs n} and \ref{pt: k+1 vs k} of Lemma \ref{lem: properties of Phi} are strict may yield further insights into the structure of MLDR codes.

\section*{Acknowledgments}
We acknowledge the support of the Natural Sciences and Engineering Research Council of Canada (NSERC), [funding reference number 2019-04103]\\
Cette recherche a \'{e}t\'{e} financ\'{e}e par le Conseil de recherches en sciences naturelles et en g\'{e}nie du Canada (CRSNG), [numéro de r\'{e}f\'{e}rence 2019-04103].

%
%
\bibliographystyle{IEEEtran}


\providecommand{\bysame}{\leavevmode\hbox to3em{\hrulefill}\thinspace}
\providecommand{\MR}{\relax\ifhmode\unskip\space\fi MR }
\providecommand{\MRhref}[2]{%
	\href{http://www.ams.org/mathscinet-getitem?mr=#1}{#2}
}
\providecommand{\href}[2]{#2}

\end{document}